% - This draft re-submitted May 28, 2011
% - Latest draft EOB Dec 23, 2010
% - Revised AD DF Jan 9, 2011
% - Revised AD DF beginning May 20, 2011
% - Revised DF July 25, 2011: updated details of Tits
%in Bibliography; inserted 'proper' ideal caveat in
%definition of congruence homomorphism.
% - Revisions 20 Dec 2011, continuing until Jan 12 2012.
% - Revisions Feb 23, 2012 continuing until Feb 28 2012.
% - Second set of revisions 31 March 2012.
% - N.B. 'JSC7617.tex'. This version deletes the silliness about
% recursively enumerable, and the undergraduate material
% inserted into the Implementation section.
\documentclass[11pt]{amsart}
\usepackage{amssymb}
\usepackage{times}
\usepackage{url}
\usepackage{hyperref}
\setlength\textwidth{36.4pc} \setlength\textheight{53pc}
\setlength\oddsidemargin{16pt} \setlength\evensidemargin{16pt}
\raggedbottom \relpenalty=10000 \binoppenalty=10000 \tolerance=500
\mathsurround=1pt

\newtheorem{theorem}{Theorem}[section]
\newtheorem{lemma}[theorem]{Lemma}
\newtheorem{proposition}[theorem]{Proposition}
\newtheorem{corollary}[theorem]{Corollary}
\theoremstyle{definition}

\newtheorem{example}[theorem]{Example}
\theoremstyle{remark}
\newtheorem{remark}[theorem]{Remark}

\newcommand{\F}{\mathbb{F}}

\newcommand{\Z}{\mathbb{Z}}

\newcommand{\Q}{\mathbb{Q}}

\def\gp#1{\langle \hspace*{.2mm} #1 \hspace*{.15mm} \rangle}
\def\group#1#2{\langle \hspace*{.275mm} #1 \hspace*{1mm} |
\hspace*{1.25mm} #2  \hspace*{.25mm}\rangle}

\begin{document}

\title{Recognizing finite matrix groups over infinite fields}

\author{A. S. Detinko}
\address{School of Mathematics, Statistics and Applied Mathematics,
National University of Ireland, Galway, Ireland}
\email{alla.detinko@nuigalway.ie}

\author{D. L. Flannery}
\address{School of Mathematics, Statistics and Applied Mathematics,
National University of Ireland, Galway, Ireland}
\email{dane.flannery@nuigalway.ie}

\author{E. A. O'Brien}
\address{Department of Mathematics, The University
of Auckland, Private Bag 92019, Auckland, New Zealand}
\email{e.obrien@auckland.ac.nz}

%\date{\today}

%\footnote{Detinko and Flannery were supported by Science
%Foundation Ireland grants 07/MI/007 and 08/RFP/MTH1331. O'Brien
%was supported by the Marsden Fund of New Zealand grant UOA 1015.
%He thanks the Department of Mathematics, National Cheng Kung
%University, Taiwan, for its hospitality while this work was
%completed. We are grateful to the referees for many useful
%comments and suggestions.}

\begin{abstract}
We present a uniform methodology for computing with finitely
generated matrix groups over any infinite field. As one
application, we completely solve the problem of deciding
finiteness in this class of groups. We also present an algorithm
that, given such a finite group as input, in practice successfully
constructs an isomorphic copy over a finite field, and uses this
copy to investigate the group's structure. Implementations of our
algorithms are available in {\sc Magma}.
\end{abstract}

\maketitle
\section{Introduction}

This paper establishes a uniform methodology for computing with
finitely generated linear groups over any infinite field. Our
techniques constitute a computational analogue of `finite
approximation' \cite[Chapter 4]{Wehrfritz}, which is a major tool
in the study of finitely generated linear groups. It relies on the
fact that each finitely generated linear group $G$ is residually
finite. Moreover, $G$ is approximated by matrix groups of the same
degree over finite fields \cite[Theorem A, \mbox{p.}
151]{ZalesskiiSurv1}. We also use the fundamental result that $G$
has a normal subgroup of finite index with every torsion element
unipotent \cite[4.8, \mbox{p.}~56]{Wehrfritz}. For computational
purposes, the key objective is to determine a congruence
homomorphism whose kernel has this property, and whose image is
defined over a finite field.

The first problem that we solve is a natural and obvious candidate
for an application of our methodology: testing finiteness of
finitely generated linear groups. This problem has been
investigated previously, but only for groups over specific
domains. Algorithms for testing finiteness over the rational field
$\Q$ are given in \cite{BBR}. One of these, based on integrality
testing, is exploited as part of the default procedures in {\sf
GAP} \cite{GAP} and {\sc Magma} \cite{Magma} to decide finiteness
over $\Q$. Groups over a characteristic zero function field are
considered in \cite{rtb99}. However, the algorithm there possibly
involves squaring dimensions. Function fields are also dealt with
in \cite{Detinko01,JSC4534,PositiveFiniteness,ivanyos01,rtb99},
where computing in matrix algebras plays a central role. While the
algorithms from \cite{JSC4534,PositiveFiniteness} have been
implemented in {\sc Magma}, we know of no implementations of those
from \cite{Detinko01,ivanyos01,rtb99}.

In this paper, we design a new finiteness testing algorithm that
may be employed, for the first time, over any infinite field. The
algorithm is concise and practical. Our implementation is
distributed with {\sc Magma}, and we demonstrate that it performs
well for a range of inputs.

If a group $G$ is finite then, in practice, we can often construct
an isomorphic copy of $G$ over some finite field. As a
consequence, drawing on recent progress in computing with matrix
groups over finite fields \cite{CT,OBE}, we obtain the first
algorithms to answer many structural questions about $G$. These
include: computing $|G|$; testing membership in $G$; computing
Sylow subgroups, a composition series, and the solvable and
unipotent radicals of $G$.

We emphasize that this paper provides a framework for the solution
of broader computational problems than the testbed ones treated
here. SW-homomorphisms (defined below) are used in \cite{Draft} to
test nilpotency over certain fields. In \cite{Tits}, these are
extended to decide virtual properties of finitely generated linear
groups. The present paper gives a comprehensive account of our
techniques that is valid in all settings. For further discussion
of how these ideas have been developed, see the survey
\cite{Survey}.

Briefly, the paper is organized as follows.
Sections~\ref{CongruenceHomomorphisms} and
\ref{ConstructionofSWhomomorphisms} set up our computational
analogue of finite approximation. The algorithms are presented and
justified in Section~\ref{FinitenessAlgorithms}. In the final
section, we report on our {\sc Magma} implementation.

\section{Congruence homomorphisms of finitely generated linear
groups} \label{CongruenceHomomorphisms}

Let $\rho$ be a proper ideal of an (associative, unital) ring
$\Delta$. The natural surjection $\Delta \rightarrow\Delta/\rho$
induces an algebra homomorphism $\mathrm{Mat}(n, \Delta)
\rightarrow\mathrm{Mat}(n, \Delta/\rho)$, which restricts to a
group homomorphism $\mathrm{GL}(n, \Delta)
\rightarrow\mathrm{GL}(n, \Delta/\rho)$. All these congruence
homomorphisms will be denoted by $\phi_{\rho}$. The
\emph{principal congruence subgroup} $\Gamma_{\rho}$ is the kernel
of $\phi_{\rho}$ in $\mathrm{GL}(n, \Delta)$.

We fix some more notation, used throughout. Let $S = \{g_1,
\ldots, g_r \} \subseteq\mathrm{GL}(n, \mathbb{F})$, where
$\mathbb{F}$ is a field. Denote $\langle S\hspace*{.2mm} \rangle$
by $G$. Then $G \leq \mathrm{GL}(n, R)$, where $R\subseteq
\mathbb{F}$ is the (Noetherian) ring generated by the entries of
the matrices $g_i$, $g_i^{-1}$, $1 \leq i \leq r$. Recall that
$R/\rho$ is a finite field if $\rho$ is a maximal ideal (see
\cite[\mbox{p.} 50]{Wehrfritz}). For the purpose of studying $G$,
we may assume without loss of generality that $\mathbb{F}$ is the
field of fractions of $R$, and is a finitely generated extension
of its prime subfield.

Each finitely generated linear group possesses a normal subgroup
$N$ of finite index whose torsion elements are all unipotent; so
$N$ is torsion-free if $\mathrm{char}\,  R = 0$. A proof of this
result, due to Selberg (1960) and Wehrfritz (1970), can be found
in \cite[4.8, \mbox{p.} 56]{Wehrfritz}; a short new proof is
supplied by Proposition~\ref{Noetherian} and
Corollary~\ref{ElementarySelberg} below. We call such a normal
subgroup $N$ of a given linear group an \emph{SW-subgroup}. If
$\rho$ is an ideal of $R$ such that $\Gamma_{\rho}$ is an
SW-subgroup of $\mathrm{GL}(n,R)$, then $\phi_{\rho}$ is an
\emph{SW-homomorphism}. We now formulate conditions that enable us
to construct SW-homomorphisms.

\begin{proposition} \label{Noetherian}
Let $\Delta$ be a Noetherian integral domain, and $\rho$ be a
maximal ideal of $\Delta$. If $\Gamma_{\rho}$ has a non-trivial
torsion element $h$, then $p := \mathrm{char} (\Delta/\rho)>0$ and
$|h|$ is a power of $p$.
\end{proposition}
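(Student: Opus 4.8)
The plan is to reduce immediately to the case that $h$ has prime order. Since $\Gamma_{\rho}$ is normal in $\mathrm{GL}(n,\Delta)$ it contains every power of $h$, so for any prime $q$ dividing $|h|$ the element $h^{|h|/q} \in \Gamma_{\rho}$ has order exactly $q$. It therefore suffices to prove: if $\Gamma_{\rho}$ contains an element of prime order $q$, then $\mathrm{char}(\Delta/\rho) = q$. Applying this to every prime divisor of $|h|$ shows that $\mathrm{char}(\Delta/\rho)$ is positive --- call it $p$ --- and that $p$ is the only prime divisor of $|h|$, whence $|h|$ is a power of $p$; that is exactly the assertion.

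So I would take $h = I_n + A \in \Gamma_{\rho}$ of prime order $q$, with $A \neq 0$ and every entry of $A$ in $\rho$. The key structural input is the Krull intersection theorem: since $\Delta$ is a Noetherian domain and $\rho$ is proper, $\bigcap_{j \geq 1}\rho^{j} = 0$. Hence there is a largest $j_{0} \geq 1$ with all entries of $A$ in $\rho^{j_{0}}$ (if not, every entry of $A$ would lie in $\bigcap_{j}\rho^{j} = 0$), and I can fix an entry $a$ of $A$ with $a \in \rho^{j_{0}} \setminus \rho^{j_{0}+1}$.

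The remaining step is to expand $(I_n + A)^{q} = I_n$ to get $qA = -\sum_{k=2}^{q}\binom{q}{k}A^{k}$. Every entry of $A^{k}$ lies in $\rho^{k j_{0}}$, and $k j_{0} \geq j_{0}+1$ for $k \geq 2$ (as $j_{0} \geq 1$), so every entry of the right-hand side lies in $\rho^{j_{0}+1}$; in particular $qa \in \rho^{j_{0}+1}$. I then claim $q \in \rho$: otherwise $q+\rho$ is a unit of the field $\Delta/\rho$, so $qu = 1+c$ for some $u \in \Delta$, $c \in \rho$, and multiplying $qa \in \rho^{j_{0}+1}$ by $u$ while using $ca \in \rho\cdot\rho^{j_{0}} = \rho^{j_{0}+1}$ would give $a = qua - ca \in \rho^{j_{0}+1}$, contradicting the choice of $a$. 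Thus $q \in \rho$, so $q = 0$ in $\Delta/\rho$, and since $q$ is prime, $\mathrm{char}(\Delta/\rho) = q$, as needed.

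The one genuinely delicate point is the appeal to the Krull intersection theorem guaranteeing $\bigcap_{j}\rho^{j} = 0$; this is precisely where the Noetherian and integral-domain hypotheses are used. (If one prefers the local version of that theorem, one can first pass to the localization $\Delta_{\rho}$, which changes neither $\Delta/\rho$ nor the congruence condition $h \equiv I_n$.) All the rest is elementary congruence arithmetic with binomial coefficients.
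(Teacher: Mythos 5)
Your proof is correct and follows essentially the same path as the paper's: Krull Intersection Theorem to pick out an entry of $A$ lying in $\rho^{c}\setminus\rho^{c+1}$, expansion of $(I_n+A)^{q}=I_n$ to show that $q$ times that entry lies in $\rho^{c+1}$, and the unit argument in $\Delta/\rho$ to force $q\in\rho$. The only difference is cosmetic: you reduce to the prime-order case at the outset, whereas the paper performs that reduction in its final sentence; your ordering is if anything a bit cleaner.
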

\begin{proof}
Set $|h|=q$. Since $\phi_\rho(h) = 1_n$, we have $h = 1_n + b$
where $b \in \mathrm{Mat}(n, \rho)$, $b\neq 0_n$. Hence $(1_n +
b)^q = 1_n$, so that
\[
\tag{$\dagger$} qb + {q\choose 2} b^2 + \cdots + b^q = 0_n.
\]
For $k\geq 2$, denote the $(i,j)$th entry of $b^k$ by
$b_{ij}^{(k)}$; then $b_{ij}^{(k)} \in \rho^k$. By the Krull
Intersection Theorem \cite[27.8, \mbox{p.} 437]{Isaacs}, $\cap_{k
= 1}^{\infty}\rho^k = \{0\}$. Hence there exists a positive
integer $c$ such that $b_{ij} \in \rho^c$ for all $i,j$, but
$b_{rs} \notin \rho^{c+1}$ for some $r, s$. This implies that
$b_{ij}^{(k)}\in \rho^{kc}$. Now
\[
qb_{rs} + {q\choose 2}b_{rs}^{(2)} + \cdots + b_{rs}^{(q)} = 0
\]
by ($\dagger$), so $qb_{rs}\in \rho^{2c} \subseteq \rho^{c+1}$.

Suppose that $q \notin \rho$. Since $\Delta/\rho$ is a field,
there exist $x\in \Delta$ and $y\in \rho$ such that $1=qx+y$. Then
\[
b_{rs} = qb_{rs}x + b_{rs}y
\]
and so, because $b_{rs} \in \rho^c$ and $qb_{rs}\in \rho^{c+1}$,
we get $b_{rs}\in \rho^{c+1}$. This contradiction proves that
$q\in \rho$. Thus $q$ must be a power of $p$. For if not, we could
have begun with $h$ of prime order different to $p$, and then
$\rho$ would contain two different prime integers and so would
contain $1$.
\end{proof}
\begin{corollary}\label{ElementarySelberg}
Let $\rho$, $\rho_1$, and $\rho_2$ be maximal ideals of the
Noetherian integral domain $\Delta$.
\begin{itemize}
\item[{\rm (i)}] If $\mathrm{char} (\Delta/\rho) = 0$ then
$\Gamma_\rho$ is torsion-free. \item[{\rm (ii)}] Suppose that
$\mathrm{char} \, \Delta = 0$, and $\mathrm{char} (\Delta/\rho_1)
\neq \mathrm{char} (\Delta/\rho_2)$. Then
$\Gamma_{\rho_1}\cap\Gamma_{\rho_2}$ is a torsion-free subgroup of
$\mathrm{GL}(n, \Delta)$. In particular, if $\Delta = R$ then
$\Gamma_{\rho_1}\cap\Gamma_{\rho_2}$ is an SW-subgroup of
$\mathrm{GL}(n, R)$. \item[{\rm (iii)}] Suppose that
$\mathrm{char} \, \Delta = p
> 0$. Then each torsion element of $\Gamma_\rho$ is unipotent. In
particular, if $\Delta = R$ then $\Gamma_{\rho}$ is an SW-subgroup
of $\mathrm{GL}(n, R)$.
\end{itemize}
\end{corollary}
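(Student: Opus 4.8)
The plan is to deduce all three parts directly from Proposition~\ref{Noetherian}, supplemented only by elementary facts about principal congruence subgroups. For (i): if $\mathrm{char}(\Delta/\rho)=0$ but $\Gamma_\rho$ contained a non-trivial torsion element, Proposition~\ref{Noetherian} would force $\mathrm{char}(\Delta/\rho)>0$, a contradiction; hence $\Gamma_\rho$ is torsion-free. For (ii): suppose $h$ is a non-trivial torsion element of $\Gamma_{\rho_1}\cap\Gamma_{\rho_2}$. If $\mathrm{char}(\Delta/\rho_i)=0$ for some $i$, part (i) applied to $\rho_i$ gives a contradiction; so $\mathrm{char}(\Delta/\rho_i)=p_i>0$ for $i=1,2$, and applying Proposition~\ref{Noetherian} to each $\rho_i$ shows $|h|$ is simultaneously a power of $p_1$ and of $p_2$. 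Since $p_1\neq p_2$ (using $\mathrm{char}\,\Delta=0$ to ensure the $p_i$ are genuinely distinct primes rather than the same residue characteristic), we conclude $|h|=1$, so $\Gamma_{\rho_1}\cap\Gamma_{\rho_2}$ is torsion-free.

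For (iii), since $\mathrm{char}\,\Delta=p>0$ we also have $\mathrm{char}(\Delta/\rho)=p$. Let $h$ be a torsion element of $\Gamma_\rho$; by Proposition~\ref{Noetherian} its order is $p^k$ for some $k\geq 0$. Because $h$ commutes with $1_n$ and $\Delta$ has characteristic $p$, the binomial theorem together with the vanishing of $\binom{p^k}{j}$ modulo $p$ for $0<j<p^k$ yields
\[
(h-1_n)^{p^k}=h^{p^k}+(-1_n)^{p^k}=h^{p^k}-1_n=0_n,
\]
so $h-1_n$ is nilpotent, i.e.\ $h$ is unipotent. Hence every torsion element of $\Gamma_\rho$ is unipotent.

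For the ``in particular'' assertions, when $\Delta=R$ each $\Gamma_{\rho}$ (resp.\ $\Gamma_{\rho_i}$) is normal in $\mathrm{GL}(n,R)$, being the kernel of $\phi_\rho$, and has finite index because $R/\rho$ is a finite field, so $\mathrm{GL}(n,R/\rho)$ is finite; an intersection of two such subgroups is again normal of finite index. Combining this with torsion-freeness in (ii) and with the unipotence of torsion elements in (iii) shows that $\Gamma_{\rho_1}\cap\Gamma_{\rho_2}$, respectively $\Gamma_\rho$, is an SW-subgroup of $\mathrm{GL}(n,R)$. The only point requiring care — and it is a minor one rather than a genuine obstacle — is the displayed identity in (iii): one must invoke that $1_n$ is central so that the binomial expansion is valid, and check that $(-1_n)^{p^k}=-1_n$ in every characteristic, including $p=2$ where $-1_n=1_n$.
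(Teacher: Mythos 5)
Your argument is correct and matches the paper's intent exactly: the paper's own proof is the single line ``Clear from Proposition~\ref{Noetherian},'' and your proposal simply spells out why, deriving (i) and (ii) by the same order-divisibility contradictions and (iii) by the standard Frobenius-style binomial identity in characteristic $p$, together with the observation that $\Gamma_\rho$ is normal of finite index because $R/\rho$ is a finite field. Nothing you add is a deviation from the paper's route; you have merely filled in the details the authors judged routine.
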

\begin{proof} Clear from Proposition~\ref{Noetherian}.
\end{proof}
Note that parts (i) and (ii) of Corollary~\ref{ElementarySelberg}
contribute to a solution of the problem posed on \mbox{p.} 70 of
\cite{Supr}.

By Corollary~\ref{ElementarySelberg} (ii), if $\mathrm{char}\,R =
0$ then an SW-subgroup can be constructed as the intersection of
two congruence subgroups. Since this may not be convenient, we
mention one more result.
\begin{proposition}\label{notsquare}
Suppose that $\Delta$ is a Dedekind domain of characteristic zero,
and $\rho$ is a maximal ideal of $\Delta$ such that $\mathrm{char}
(\Delta/ \rho)=p>2$. If $p\notin \rho^{2}$ then $\Gamma_\rho$ is
torsion-free.
\end{proposition}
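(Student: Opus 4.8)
The plan is to argue by contradiction, following the proof of Proposition~\ref{Noetherian} but using the Dedekind hypothesis to pin down the $\rho$-adic valuation of $p$ exactly; this is what lets us dispense with the Krull Intersection Theorem and sharpen its conclusion. So suppose $\Gamma_\rho$ has a non-trivial torsion element. By Proposition~\ref{Noetherian} its order is a power of $p$, so after replacing it by a suitable power we may assume that we have $h\in\Gamma_\rho$ of order exactly $p$. Write $h=1_n+b$, where $b\in\mathrm{Mat}(n,\rho)$ and $b\neq 0_n$. Since $b$ commutes with $1_n$, expanding $(1_n+b)^p=1_n$ gives
\[
\tag{$\ddagger$} pb + {p\choose 2}b^2 + \cdots + {p\choose p-1}b^{p-1} + b^p = 0_n .
\]

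Next I would bring in the $\rho$-adic valuation. Because $\Delta$ is a Dedekind domain, every non-zero ideal of $\Delta$ factors uniquely into maximal ideals (equivalently, the localization $\Delta_\rho$ is a discrete valuation ring); so for non-zero $x\in\Delta$ we may let $v(x)$ be the exponent of $\rho$ in the factorization of $x\Delta$. Then $x\in\rho^{k}$ if and only if $v(x)\geq k$, the map $v$ is additive on products, and $v(x+y)\geq\min\{v(x),v(y)\}$ with equality whenever $v(x)\neq v(y)$. The crucial point is that the hypotheses $\mathrm{char}(\Delta/\rho)=p$ and $p\notin\rho^{2}$ say precisely that $p\in\rho\setminus\rho^{2}$, i.e.\ $v(p)=1$; this is the one input beyond Proposition~\ref{Noetherian}, and it replaces the Krull Intersection argument used there.

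Now set $c=\min_{i,j}v(b_{ij})$. Since $b\in\mathrm{Mat}(n,\rho)\setminus\{0_n\}$ we have $1\leq c<\infty$, and we fix $r,s$ with $v(b_{rs})=c$. A short induction, as in the proof of Proposition~\ref{Noetherian}, shows $v\big((b^{k})_{ij}\big)\geq kc$ for all $k\geq 1$. Reading off the $(r,s)$ entry of ($\ddagger$): the term $pb_{rs}$ has valuation $1+c$; each term ${p\choose k}(b^{k})_{rs}$ with $2\leq k\leq p-1$ has valuation $\geq 1+kc$, because $p\mid{p\choose k}$; and $(b^{p})_{rs}$ has valuation $\geq pc$. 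As $p>2$ and $c\geq 1$, we have $1+kc\geq 1+2c>1+c$ for $2\leq k\leq p-1$ and $pc\geq 3c>1+c$, so the sum of all the terms of ($\ddagger$) other than $pb_{rs}$ has valuation strictly greater than $1+c$. But that sum equals $-pb_{rs}$, of valuation exactly $1+c$, a contradiction. Hence $\Gamma_\rho$ is torsion-free.

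The only genuinely delicate step is the identity $v(p)=1$ — more precisely, reading off from the Dedekind structure that $p\notin\rho^{2}$ forces $v(p)<2$; everything after that is routine valuation arithmetic transcribed from the earlier proof. It is also worth noting why $p=2$ must be excluded: then the last term of ($\ddagger$) is $b^{2}$, of valuation $2c$, which coincides with the valuation $1+c$ of $pb$ when $c=1$, so the valuations no longer separate and the argument collapses.
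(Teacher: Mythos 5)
The paper gives no proof of its own for Proposition~\ref{notsquare}; it simply cites \cite[Theorem~4, p.~70]{Supr}, so there is no in-paper argument against which to match yours. Your proof is correct and self-contained. It follows the template of the proof of Proposition~\ref{Noetherian}, but replaces the appeal to the Krull Intersection Theorem with the $\rho$-adic valuation $v$ on the Dedekind domain $\Delta$: the hypotheses $\mathrm{char}(\Delta/\rho)=p$, $\mathrm{char}\,\Delta=0$, and $p\notin\rho^2$ together translate to $v(p)=1$, which forces $v(pb_{rs})=1+c$ exactly, while every other term in $(\ddagger)$ at position $(r,s)$ has valuation at least $1+2c$ (using $p\mid\binom{p}{k}$ for $1\leq k\leq p-1$) or at least $pc$, and both exceed $1+c$ strictly because $p>2$ and $c\geq 1$; the ultrametric inequality then supplies the contradiction. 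Your preliminary reduction to an element of order exactly $p$, justified by Proposition~\ref{Noetherian}, is what makes the divisibility $p\mid\binom{p}{k}$ available throughout, and your observation that the argument collapses at $p=2$ (where $2c=1+c$ when $c=1$) correctly explains the hypothesis $p>2$. This is a clean elementary alternative to the citation and keeps the proof entirely within the paper's own toolkit.
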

\begin{proof} See \cite[Theorem 4, \mbox{p.} 70]{Supr}.
\end{proof}

\section{Construction of SW-homomorphisms}
\label{ConstructionofSWhomomorphisms}

We now outline methods to construct both congruence homomorphisms
and SW-homomorphisms, given the assumptions on $\mathbb{F}$ made
in the second paragraph of Section~\ref{CongruenceHomomorphisms}.

Since $\mathbb{F}$ is a finitely generated extension of its prime
subfield, there is a subfield $\mathbb{P}\subseteq \mathbb{F}$ of
finite degree over the prime subfield, and elements $x_1, \ldots,
x_m$ ($m\geq 0$) algebraically independent over $\mathbb{P}$, such
that $\mathbb F$ is a finite extension of $\mathbb{L}=
\mathbb{P}(x_1, \ldots, x_m)$; say $|\mathbb{F}:\mathbb{L}| =
e\geq 1$. Here $|\mathbb{P}:\mathbb{Q}| = k \geq 1$ if
$\mathrm{char}\, \mathbb{F} = 0$, and if $\mathrm{char}\,
\mathbb{F} = p>0$ then $\mathbb{P}$ is the field $\mathbb{F}_{q}$
of size $q$.

Each type of field is considered in its own section below. For an
integral domain $\Delta$ and $\mu\in \Delta \setminus \{ 0\}$, let
$\frac{1}{\mu}\Delta$ denote the ring of fractions with
denominators in the multiplicative submonoid of $\Delta$ generated
by $\mu$.

\subsection{The rational field}
\label{RationalField}

Let $\mathbb{F} = \mathbb Q$. Then $R=\frac{1}{\mu}\mathbb Z$
where $\mu$ is the least common multiple of the denominators of
the entries in the matrices $g_i, g_i^{-1}$, $1\leq i\leq r$. For
a prime $p \in \mathbb Z$ not dividing $\mu$, define $\phi_1 =
\phi_{1,p}: \mathrm{GL}(n, R) \rightarrow \mathrm{GL}(n,p)$ to be
entry-wise reduction modulo $p$. If $p>2$ then we denote
$\phi_{1,p}$ by $\Phi_1= \Phi_{1,p}$. By
Proposition~\ref{notsquare}, $\Phi_1$ is an SW-homomorphism.

\subsection{Number fields}
\label{AlgebraicNumberFieldsSection}

Let $\mathbb{F}$ be a number field, so that $\mathbb{F} = \mathbb
Q(\alpha)$ for some algebraic number $\alpha$. Let $f(t)$ be the
minimal polynomial of $\alpha$, of degree $k$. Multiplying
$\alpha$ by a common multiple of the denominators of the
coefficients of $f(t)$, if necessary, we may assume that $\alpha$
is an algebraic integer; that is,  $f(t) \in \mathbb{Z}[t]$.

We have $R \subseteq \frac{1}{\mu}\mathbb Z[\alpha] \subseteq
\frac{1}{\mu}\mathcal{O}$ for some $\mu \in \mathbb Z$, where
$\mathcal{O}$ is the ring of integers of $\mathbb{F}$. We define
an SW-homomorphism on $R$ as the restriction of a congruence
homomorphism on the Dedekind domain $\frac{1}{\mu}\mathcal{O}$.

Let $p \in \mathbb Z$ be a prime not dividing $\mu$, and denote by
$\bar{f}(t)$ the polynomial obtained by mod $p$ reduction of the
coefficients of $f(t)$. Further, let $\bar{\alpha}$ be a root of
$\bar{f}(t)$, so that $\bar{\alpha}$ is a root of some
$\mathbb{Z}_p$-irreducible factor $\bar{f}_j(t)$ of $\bar{f}(t)$.
Each $b\in R$ may be expressed uniquely in the form $b =
\sum^{k-1}_{i = 0}c_i\alpha^i$ where $c_i \in \frac{1}{\mu}
\mathbb Z$. Thus the assignment $\phi_{2,p} : b \mapsto
\sum^{k-1}_{i=0}\phi_{1,p}(c_i)\bar{\alpha}^i$ is well-defined.
Moreover, $\phi_{2,p}$ is a ring homomorphism $R\rightarrow
\mathbb Z_p(\bar{\alpha})= \mathbb F_{p^l}$, say. Thus we have an
induced congruence homomorphism $\phi_{2,p}: \mathrm{GL}(n, R)
\rightarrow\mathrm{GL}(n, p^{l})$.

Next, we state criteria under which $\phi_2 = \phi_{2,p}$ is an
SW-homomorphism.
\begin{lemma} \label{DiscriminantHypothesis}
Suppose that $p\in \Z$ is an odd prime dividing neither $\mu$ nor
the discriminant of $f(t)$. Then the kernel of $\phi_{2,p}$ on
$\mathrm{GL}(n, R)$ is torsion-free.
\end{lemma}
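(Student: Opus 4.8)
The plan is to reduce Lemma~\ref{DiscriminantHypothesis} to the setting of Proposition~\ref{notsquare} by showing that, under the stated hypotheses, $\phi_{2,p}$ is (the restriction of) the congruence homomorphism $\phi_\rho$ attached to a suitable maximal ideal $\rho$ of the Dedekind domain $\Delta = \frac{1}{\mu}\mathcal{O}$, and that $p \notin \rho^2$. Since $\mathrm{char}(\Delta/\rho) = p > 2$ and $\mathrm{char}\,\Delta = 0$, Proposition~\ref{notsquare} will then immediately give that $\Gamma_\rho$ is torsion-free, and hence so is its restriction to $\mathrm{GL}(n, R)$, which is the kernel of $\phi_{2,p}$ on $\mathrm{GL}(n,R)$.

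First I would identify the ideal. The map $\phi_{2,p}\colon R \to \mathbb{F}_{p^l}$ described before the lemma is defined via a choice of $\Z_p$-irreducible factor $\bar f_j(t)$ of $\bar f(t)$; its kernel is a maximal ideal $\mathfrak{m}$ of $R$, and I would take $\rho$ to be the corresponding maximal ideal of $\Delta = \frac{1}{\mu}\mathcal{O}$ lying over the rational prime $p$ and matching the factor $\bar f_j$. Concretely, since $p \nmid \mu$, the prime $p$ is a unit nowhere lost in passing to $\frac{1}{\mu}\mathcal{O}$, and the primes of $\Delta$ above $p$ correspond to the irreducible factors of $\bar f(t)$ over $\Z_p$ — this is the standard Dedekind factorization, valid precisely because $p$ does not divide the discriminant of $f(t)$ (equivalently, $\Z[\alpha]$ and $\mathcal{O}$ agree locally at $p$, so $p$ is unramified and $\mathcal{O}/p\mathcal{O} \cong \prod_j \mathbb{F}_p[t]/(\bar f_j(t))$). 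Thus $\rho$ is the prime of $\Delta$ with $\Delta/\rho \cong \mathbb{F}_p[t]/(\bar f_j(t)) = \mathbb{F}_{p^l}$, and one checks that $\phi_\rho$ restricted to $R$ agrees with $\phi_{2,p}$ — both send $\alpha \bmod \rho$ and $\bar\alpha$ to the same generator, and both are $\frac{1}{\mu}\Z$-algebra maps.

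Next I would verify the non-ramification condition $p \notin \rho^2$. Because $p$ does not divide the discriminant of $f(t)$, the prime $p$ is unramified in $\Z[\alpha]$, and since $p \nmid \mu$ localization at $p$ identifies $\frac{1}{\mu}\Z[\alpha]$ with $\frac{1}{\mu}\mathcal{O}$ at the primes above $p$; hence the ramification index of $\rho$ over $p$ is $1$, which says exactly that $p \notin \rho^2$ (indeed $p\Delta = \prod_j \rho_j$ is a product of distinct primes to the first power). One small point to be careful about: the discriminant of the \emph{field} $\mathbb{F}$ divides that of $f(t)$, so ``$p \nmid \mathrm{disc}(f)$'' is stronger than ``$p$ unramified'' and in particular guarantees the clean factorization above, not merely unramifiedness.

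The main obstacle, and the only step requiring genuine care, is the bookkeeping in the previous paragraph: matching the combinatorial description of $\phi_{2,p}$ (choose a root of an irreducible factor of $\bar f$) with the ideal-theoretic description ($\phi_\rho$ for a prime $\rho \mid p$), and confirming that the kernel of $\phi_{2,p}$ on $\mathrm{GL}(n,R)$ is genuinely $\Gamma_\rho \cap \mathrm{GL}(n,R)$ rather than something larger. This is where the hypothesis $p \nmid \mathrm{disc}(f)$ does all the work, via the Dedekind criterion relating factorization of $\bar f(t)$ to the splitting of $p$ in $\mathcal{O}$; once that dictionary is in place, the torsion-freeness is simply a citation of Proposition~\ref{notsquare} with $\Delta = \frac{1}{\mu}\mathcal{O}$. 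The oddness of $p$ is needed only because Proposition~\ref{notsquare} requires $p > 2$.
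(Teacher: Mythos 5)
Your proof is correct and takes essentially the same route as the paper's: both identify the kernel of $\phi_{2,p}$ on $\mathrm{GL}(n,R)$ with $\Gamma_\rho \cap \mathrm{GL}(n,R)$ for a maximal ideal $\rho$ of the Dedekind domain $\frac{1}{\mu}\mathcal{O}$ corresponding to the chosen factor $\bar f_j$ (the paper writes $\rho = (p, f_j(\alpha))$ explicitly), verify $p \notin \rho^2$ from the hypothesis $p \nmid \mathrm{disc}(f)$ (the paper cites Koch where you invoke the Dedekind criterion, but these are the same statement), and then conclude by Proposition~\ref{notsquare}. Your extra remarks on the bookkeeping between the combinatorial and ideal-theoretic descriptions, and on field versus polynomial discriminants, are accurate and merely make explicit what the paper leaves implicit.
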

\begin{proof}
Let $f_j(t)$ be a preimage of $\bar{f}_j(t)$ in $\mathbb Z[t]$.
The ideal $\rho$ generated by $p$ and $f_j(\alpha)$ in
$\frac{1}{\mu}\mathcal{O}$ is maximal, by \cite[Theorem
3.8.2]{Koch}. Hence $\rho \cap R$ is a maximal ideal of $R$. Also
$p\not \in \rho^2$ by \cite[Proposition 3.8.1, Theorem
3.8.2]{Koch}. The lemma then follows from
Proposition~\ref{notsquare}.
\end{proof}

\begin{lemma}\label{ThreePointThree}
There are no non-trivial $p$-subgroups of $\mathrm{GL}(n,
\mathbb{F})$ if $p>nk + 1$.
\end{lemma}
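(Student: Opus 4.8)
The plan is to reduce the statement to one about elements of order $p$ and then settle it by a cyclotomic degree count. Any non-trivial $p$-group contains an element of order $p$ (if $x\neq 1$ has order $p^{m}$ with $m\geq 1$, then $x^{p^{m-1}}$ has order $p$), and conversely an element of order $p$ generates a non-trivial $p$-subgroup. So it suffices to prove that $\mathrm{GL}(n,\mathbb{F})$ contains no element of order $p$ once $p>nk+1$, where $k=|\mathbb{F}:\mathbb{Q}|$ is the degree of the minimal polynomial $f$ of $\alpha$ fixed above.

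Suppose for contradiction that $g\in\mathrm{GL}(n,\mathbb{F})$ has order exactly $p$. Since $\mathrm{char}\,\mathbb{F}=0$, the polynomial $t^{p}-1$ is separable, and $g$ is annihilated by it; hence $g$ is diagonalizable over an algebraic closure $\overline{\mathbb{Q}}$ of $\mathbb{F}$, with every eigenvalue a $p$-th root of unity. These eigenvalues are not all equal to $1$, for otherwise $g=1_{n}$; so some eigenvalue $\zeta$ is a primitive $p$-th root of unity. Then $|\mathbb{Q}(\zeta):\mathbb{Q}|=p-1$, since the $p$-th cyclotomic polynomial is irreducible over $\mathbb{Q}$ of degree $p-1$.

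Next I would bound the degree of $\zeta$ over $\mathbb{F}$ from above. Being an eigenvalue of $g$, the element $\zeta$ is a root of the characteristic polynomial $\det(t\cdot 1_{n}-g)\in\mathbb{F}[t]$, which is monic of degree $n$; therefore $|\mathbb{F}(\zeta):\mathbb{F}|\leq n$. By the tower law,
\[
p-1=|\mathbb{Q}(\zeta):\mathbb{Q}|\leq|\mathbb{F}(\zeta):\mathbb{Q}|=|\mathbb{F}(\zeta):\mathbb{F}|\cdot|\mathbb{F}:\mathbb{Q}|\leq nk,
\]
so $p\leq nk+1$, contradicting the hypothesis $p>nk+1$. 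Hence $\mathrm{GL}(n,\mathbb{F})$ has no element of order $p$, and the lemma follows.

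There is no serious obstacle here; the two points needing a line of care are that a finite-order matrix in characteristic zero is semisimple — so ``order $p$ and not the identity'' genuinely forces a primitive $p$-th root of unity to appear among the eigenvalues, rather than merely a unipotent contribution — and the elementary estimate $|\mathbb{F}(\zeta):\mathbb{F}|\leq n$ coming from $\zeta$ satisfying the degree-$n$ characteristic polynomial of $g$. The value $p-1$ for the cyclotomic degree is classical.
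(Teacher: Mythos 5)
Your argument is correct; the reduction to an element of order $p$, the diagonalizability in characteristic zero, and the degree count all hold up. But it travels a slightly different path from the paper's. The paper first disposes of the rational case ($g\in\mathrm{GL}(n,\mathbb{Q})$ of order $p$ forces the $p$th cyclotomic polynomial to divide the degree-$n$ characteristic polynomial, so $p-1\leq n$), and then invokes the restriction-of-scalars embedding $\mathrm{GL}(n,\mathbb{F})\hookrightarrow\mathrm{GL}(nk,\mathbb{Q})$ to get $p-1\leq nk$ in one stroke. You instead stay over $\mathbb{F}$: you bound $|\mathbb{F}(\zeta):\mathbb{F}|\leq n$ from the characteristic polynomial and then apply the tower law $|\mathbb{Q}(\zeta):\mathbb{Q}|\leq |\mathbb{F}(\zeta):\mathbb{F}|\cdot|\mathbb{F}:\mathbb{Q}|$. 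Both hinge on the cyclotomic degree $p-1$, but yours avoids the blow-up to degree $nk$ and the associated embedding lemma entirely, making the proof more self-contained; the paper's version is shorter because it leans on a standard embedding it needs elsewhere anyway (e.g.\ in Lemma~\ref{FiniteIsomorphic}). Either is a perfectly good proof of the lemma.
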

\begin{proof}
Let $g\in \mathrm{GL}(n, \mathbb{Q})$ be of order $p$. Since the
characteristic polynomial of $g$ has a primitive $p$th root of
unity as a root, it is divisible by the $p$th cyclotomic
polynomial. Thus $p-1 \leq n$. The general claim holds because
each subgroup of $\mathrm{GL}(n,\mathbb F)$ is isomorphic to a
subgroup of $\mathrm{GL}(nk,\mathbb Q)$.
\end{proof}

\begin{corollary} \label{TorsionFreePrimeLowerBound}
Suppose that $\Delta$ is a Noetherian subring of $\mathbb{F}$, and
$\rho$ is a maximal ideal of $\Delta$ such that
$\mathrm{char}(\Delta/\rho)=p>nk+1$. Then $\Gamma_\rho$ is
torsion-free.
\end{corollary}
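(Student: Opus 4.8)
The plan is to deduce this quickly from Proposition~\ref{Noetherian} and Lemma~\ref{ThreePointThree}, since between them they already do all the real work: the proposition produces a $p$-element out of any torsion element of $\Gamma_\rho$, and the lemma rules such elements out once $p$ is large. So I would argue by contradiction.

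First I would observe that $\Delta$, being a subring of the field $\mathbb{F}$, is automatically an integral domain, and it is Noetherian by hypothesis; moreover $\rho$ is a maximal ideal with $\mathrm{char}(\Delta/\rho)=p>0$. Thus the hypotheses of Proposition~\ref{Noetherian} are in force. Now suppose, for contradiction, that $\Gamma_\rho$ contains a non-trivial torsion element $h$. By Proposition~\ref{Noetherian}, $|h|$ is a power of $p$, so on replacing $h$ by an appropriate power of itself we may assume $|h|=p$. Then $\langle h\rangle$ is a non-trivial $p$-subgroup of $\mathrm{GL}(n,\mathbb{F})$.

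Finally I would invoke Lemma~\ref{ThreePointThree}: since $p>nk+1$, the group $\mathrm{GL}(n,\mathbb{F})$ has no non-trivial $p$-subgroup, contradicting the existence of $\langle h\rangle$. Hence $\Gamma_\rho$ has no non-trivial torsion element, i.e.\ it is torsion-free, as claimed. I do not anticipate any genuine obstacle here; the only points requiring a moment's care are checking that $\Delta$ meets the standing hypotheses of Proposition~\ref{Noetherian} (integral domain, Noetherian, maximal ideal) and noting that the parameter $k$ in Lemma~\ref{ThreePointThree} is the same $k=|\mathbb{F}:\mathbb{Q}|$ appearing in the bound $p>nk+1$.
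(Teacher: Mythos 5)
Your proof is correct and follows exactly the route the paper intends: it is a direct consequence of Proposition~\ref{Noetherian} (which forces any non-trivial torsion element of $\Gamma_\rho$ to be a $p$-element) combined with Lemma~\ref{ThreePointThree} (which excludes non-trivial $p$-subgroups of $\mathrm{GL}(n,\mathbb{F})$ when $p>nk+1$). The paper's own proof is just a one-line citation of these two results; your write-up simply spells out the contradiction they yield.
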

\begin{proof}
This is a consequence of Proposition~\ref{Noetherian} and
Lemma~\ref{ThreePointThree}.
\end{proof}

Let $p\in \mathbb Z$ be a prime not dividing $\mu$. We denote
$\phi_{2,p}$ by $\Phi_2 = \Phi_{2,p}$ if one of the following
extra conditions on $p$ is satisfied: $p$ is odd and does not
divide the discriminant of the minimal polynomial of $\alpha$; or
$p>nk+1$. The preceding discussion shows that $\Phi_{2}$ is an
SW-homomorphism.

\begin{example}
Suppose that $\mathbb{F}$ is a cyclotomic field, say $\mathbb{F} =
\Q(\zeta)$ where $\zeta$ is a primitive $c$th root of unity,
$c>2$. If $p>2$ and $p$ does not divide $\mathrm{lcm}(\mu, c)$,
then $\phi_{2,p}$ is an SW-homomorphism by
Lemma~\ref{DiscriminantHypothesis}.
\end{example}

\subsection{Function fields}
\label{FFieldSubsection}

Let $\mathbb F = \mathbb P(x_1, \ldots, x_m)$, $m \geq 1$, where
$\mathbb{P}$ is $\Q$, a number field, or $\mathbb{F}_q$. We have
$R \subseteq \frac{1}{\mu}\mathbb P[x_1, \ldots, x_m]$ for some
$\mu = \mu(x_1, \ldots, x_m)$ determined by $S\cup S^{-1}$.

Let $a = (a_1, \ldots, a_m)$ be a non-root of $\mu$. If
$\mathrm{char}\, \mathbb{F} = 0$, then $a_i\in \mathbb{P}$ for all
$i$; if $\mathbb F$ has positive characteristic, then the $a_i$
are in $\mathbb P$ or some finite extension. Define $\phi_3 =
\phi_{3,a}$ to be the map that substitutes $a_i$ for $x_i$, $1
\leq i \leq m$. Corollary~\ref{ElementarySelberg} (i) implies that
$\phi_3: \mathrm{GL}(n, R) \rightarrow \mathrm{GL}(n, \mathbb P)$
is a homomorphism with torsion-free kernel if $\mathrm{char}\,
\mathbb{F} = 0$. We then obtain an SW-homomorphism in zero
characteristic by setting $\Phi_3 = \Phi_{3,a,p}=\Phi_{i,p} \circ
\phi_{3,a}$, where $i =1$ or $2$ if $\mathbb P = \mathbb Q$ or
$\mathbb{P}$ is a number field, respectively. If $\mathbb{P} =
\mathbb{F}_q$ then $\Phi_3 = \phi_{3}$ is an SW-homomorphism by
Corollary~\ref{ElementarySelberg} (iii). Notice that
$\Phi_{3,a,p}$ is defined for all but a finite number of $a$ and
$p$ when $m=1$; otherwise, $\Phi_{3,a,p}$ is defined for
infinitely many $a$ and $p$.

\subsection{Algebraic function fields}
\label{AlgebraicFunctionFields}

For $m\geq 1$, let $\mathbb L =\mathbb P(x_1, \ldots, x_m)$ and
$\mathbb{L}_0= \mathbb{P}[x_1, \dots, x_m]$, where again
$\mathbb{P}$ is $\Q$, a number field, or $\mathbb{F}_q$. We assume
that $\mathbb F = \mathbb L(\alpha)$ is a simple extension of
$\mathbb L$ of degree $e>1$. For instance, we can stipulate that
$\mathbb{F}$ is a separable extension of $\mathbb L$ (e.g., in
characteristic $p$ this is assured if $p\nmid e$). Let $f(t)\in
\mathbb L_0[t]$ be the minimal polynomial of $\alpha$. We have $R
\subseteq \frac{1}{\mu}\mathbb L_0[\alpha]$ for some $\mu \in
\mathbb L_0$ determined in the usual way by the input $S$.

Suppose that $a=(a_1, \ldots, a_m)$ is a non-root of $\mu$, where
the $a_i$ are in $\mathbb{P}$ or a finite extension. Denote by
$\bar f(t)$ the polynomial obtained by substitution of $a$ in the
coefficients of $f(t)$. Define $\bar{c} = \phi_{3,a}(c)$ for $c\in
\frac{1}{\mu}\mathbb{L}_0$ similarly. Let $\bar{\alpha}$ be a root
of $\bar{f}(t)$. Define $\phi_4=\phi_{4,a}: R\rightarrow \mathbb
P(\bar {\alpha})$ by $\phi_4: \sum_{i=0}^{e-1}c_i\alpha^i \mapsto
\sum_{i = 0}^{e-1}\bar {c}_i\bar {\alpha}^i$. Therefore, if
$\rm{char}\, \mathbb{F} = 0$ then we get an induced congruence
homomorphism $\phi_4: \mathrm{GL}(n, R) \rightarrow\mathrm{GL}(n,
\mathbb P(\bar {\alpha}))$, whose kernel is torsion-free by
Corollary~\ref{ElementarySelberg} (i). Set $\Phi_4=\Phi_{4,a,p} =
\Phi_{i,p} \circ \phi_{4,a}$, where $i=1$ if $\mathbb P(\bar
{\alpha}) = \Q$, and $i=2$ if $\mathbb P(\bar {\alpha})$ is a
number field. If $\mathrm{char}\, \mathbb{F}
> 0$ then we set $\Phi_{4} = \phi_{4}$. In all cases $\Phi_4$
is an SW-homomorphism. As with $\Phi_{3,a,p}$, the homomorphism
$\Phi_{4,a,p}$ is defined for infinitely many $a$ and $p$, and for
all but a finite number of $a$, $p$ when $m=1$.

\begin{remark}
Fields $\F$ as in
Sections~\ref{RationalField}--\ref{AlgebraicFunctionFields} are
the main ones supported by {\sf GAP} and {\sc Magma}.
\end{remark}

\begin{remark}
SW-homomorphisms are used in \cite[Section 5.3]{Tits} to test
whether $G\leq \mathrm{GL}(n,\F)$ is central-by-finite; indeed,
each `W-homomorphism' defined in that paper is a special kind of
SW-homomorphism. They also feature in the nilpotency testing
algorithm of \cite{Draft}.
\end{remark}

\subsection{Analyzing congruence homomorphisms }
\label{AnalyzingCongruenceHomomorphismsSubsection}

We now prove some results that will be helpful in the analysis of
our algorithms.

\begin{lemma}\label{ForAllButaFiniteNumber}
Let $\Delta$ be a Dedekind domain, and let $G$ be a finitely
generated subgroup of $\mathrm{GL}(n, \Delta)$. For all but a
finite number of maximal ideals $\rho$ of $\Delta$, the following
are true:
\begin{itemize} \item[{\rm (i)}] if $G$ is finite then
$\phi_{\rho}$ is an isomorphism of $G$ onto $\phi_\rho(G)$;
\item[{\rm (ii)}] if $G$ is infinite, and $\nu$ is a positive
integer, then $\phi_{\rho}(G)$ contains an element of order
greater than $\nu$.
\end{itemize}
\end{lemma}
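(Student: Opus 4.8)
The plan is to treat the two parts essentially together, since both follow from controlling the torsion (or, more precisely, the finitely many "small" elements) that can lie in a principal congruence subgroup. First I would fix a finite generating set $g_1, \ldots, g_r$ of $G$. For part (i), the key observation is that if $G$ is finite then $G$ contains only finitely many elements, and for each non-identity $g \in G$ there is at least one matrix entry of $g - 1_n$ that is a nonzero element of $\Delta$. A nonzero element of a Dedekind domain lies in only finitely many maximal ideals $\rho$. Taking the union over all $g \in G \setminus \{1_n\}$ of the (finite) sets of maximal ideals containing some such entry, we get a finite set $\mathcal{E}$ of "bad" ideals; for every $\rho \notin \mathcal{E}$, no nonidentity element of $G$ lies in $\Gamma_\rho$, so $\phi_\rho$ is injective on $G$, hence an isomorphism onto its image. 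This handles (i).

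For part (ii), suppose $G$ is infinite and fix $\nu$. I would argue as follows. By the Selberg--Wehrfritz result (equivalently, by Corollary~\ref{ElementarySelberg} together with the congruence-subgroup machinery of Section~\ref{CongruenceHomomorphisms}), choose an SW-subgroup $N$ of finite index in $G$; then $N$ is infinite, and in characteristic zero it is torsion-free, while in characteristic $p$ its torsion elements are unipotent. In either case $N$ contains an element $h$ of infinite order: in characteristic $0$ this is immediate; in characteristic $p$ one uses that an infinite finitely generated linear group whose torsion is unipotent cannot be a torsion group (a periodic linear group is locally finite, by Schur/Burnside--Jordan-type arguments, cf.\ \cite{Wehrfritz}), so some element has infinite order. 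Now $h^k$ for $k = 1, 2, \ldots, \nu!$ are $\nu!$ distinct elements of $G$; none equals $1_n$, so for each such $k$ the matrix $h^k - 1_n$ has a nonzero entry in $\Delta$, lying in only finitely many maximal ideals. Excluding these finitely many ideals (for all $k \le \nu!$, and also those in $\mathcal{E}$ if desired), for every remaining $\rho$ the images $\phi_\rho(h)^k = \phi_\rho(h^k)$, $1 \le k \le \nu!$, are pairwise distinct, so $\phi_\rho(h)$ has order exceeding $\nu!$, hence greater than $\nu$, and in particular $\phi_\rho(G)$ contains such an element.

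A cleaner alternative for (ii), which I would actually prefer to write up, avoids explicitly invoking local finiteness: since $G$ is infinite and finitely generated, it is not a torsion group precisely when... — but to sidestep the positive-characteristic subtlety entirely, note that it suffices to exhibit, for each $\nu$, \emph{some} element $g \in G$ such that $g, g^2, \ldots, g^{\nu}$ are $\nu$ distinct elements; equivalently $g$ has order $> \nu$ (possibly infinite). If $G$ has an element of infinite order we are done as above. If every element of $G$ has finite order, then $G$ is a finitely generated periodic linear group, hence finite by the Burnside--Schur theorem — contradicting the hypothesis that $G$ is infinite. So an element of infinite order always exists, and the entry-wise argument finishes the proof. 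The main obstacle is simply making sure the "only finitely many maximal ideals contain a given nonzero element" step is correctly invoked in the Dedekind setting (it is exactly the statement that a nonzero ideal has finitely many prime divisors), and, in part (ii), pinning down cleanly why an infinite finitely generated linear group contains an element of infinite order; once those are in hand, both parts are short.
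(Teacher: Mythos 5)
Your proposal is correct and follows essentially the same route as the paper: both rest on (a) the fact that a nonzero element of a Dedekind domain lies in only finitely many maximal ideals, applied to suitable matrix-entry differences, and (b) the Schur theorem that an infinite finitely generated linear group contains an element of infinite order. Your ``cleaner alternative'' for part~(ii) is precisely the paper's argument; the first version via an SW-subgroup is an unnecessary detour, and the choice of $\nu!$ powers is overkill (the paper takes $g, g^2, \ldots, g^{\nu+1}$), but neither affects correctness.
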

\begin{proof}(\mbox{Cf.} \cite[\mbox{p.} 51]{Wehrfritz} and \cite[Lemma
3]{JSC4534}.) Note that a non-zero element $a$ of $\Delta$ is
contained in only finitely many maximal ideals of $\Delta$. To see
this, let $a\Delta = \rho_{1}^{e_1}\cdots \rho_{c}^{e_c}$, where
the $\rho_i$ are maximal ideals. If $\rho$ is a maximal ideal of
$\Delta$ containing $a$, then $\rho_{1}^{e_1}\cdots \rho_{c}^{e_c}
\subseteq \rho$, so $\rho=\rho_i$ for some $i$.

Next, let $M=\{h_1, \ldots , h_d \} \subseteq \mathrm{Mat}(n,
\Delta)$, and for each pair $l,k\in\{1, \ldots,d\}$, $l\neq k$,
choose $(i,j)$ such that $h_l(i,j)- h_k(i,j)\neq 0$. Denote the
product of all differences $h_l(i,j)- h_k(i,j)$ by $a_M$. If
$\rho$ is an ideal of $\Delta$ not containing $a_M$, then
$|\phi_{\rho}(M)| = |M|$.

Taking $M$ to be the set of elements of $G$,  part (i) is now
clear.

If $G$ is infinite then $G$ contains an element $g$ of infinite
order, by a result of Schur \cite[Theorem~5, \mbox{p.} 181]{Supr}.
Thus, taking $M$ to be $\{g,\ldots, g^{\nu}, g^{\nu + 1}\}$, we
get (ii).
\end{proof}

To utilize Lemma~\ref{ForAllButaFiniteNumber} in our context, let
$\mathbb{F}$ be one of $\mathbb{Q}$, a number field,
$\mathbb{P}(x)$, or a finite extension of $\mathbb{P}(x)$. The
relevant SW-homomorphism $\Phi$ on $\mathrm{GL}(n,R)$ is the
restriction of a congruence homomorphism $\phi_\rho$ on
$\mathrm{GL}(n,\Delta)$, where $\Delta$ is a Dedekind domain with
maximal ideal $\rho$. Hence for $G \leq \mathrm{GL}(n,R)$ and all
but a finite number of choices in the definition of $\phi_\rho$,
the following hold: (a)~if $G$ is finite, then $\Phi$ is an
isomorphism on $G$; (b)~if $G$ is infinite, then $\Phi(G)$
contains an element of order greater than any given positive
integer $\nu$. For the other fields $\mathbb{F}$ where $R$ may not
be contained in a Dedekind domain (function fields with more than
one indeterminate, or finite extensions thereof), it is still true
that there are infinitely many SW-homomorphisms $\Phi$ such that
(a) and (b) hold. This follows from the definition of $\Phi$ in
each case, and arguing as in the proof of
Lemma~\ref{ForAllButaFiniteNumber}.

\section{Finiteness algorithms for matrix groups}
\label{FinitenessAlgorithms}

\subsection{Preliminaries: asymptotic bounds}
\label{asymptotics}

We continue with the notation of the previous section:
$|\mathbb{F}:\mathbb{L}|=e\geq 1$, $\mathbb{L}= \mathbb{P}(x_1,
\ldots, x_m)$, $m\geq 0$, and $|\mathbb{P}:\mathbb{Q}|=k \geq 1$
or $\mathbb{P} = \mathbb{F}_q$.

Suppose first that $\mathrm{char}\, \mathbb{F} = 0$. Put
$n_0=nke$.
\begin{lemma}\label{FiniteIsomorphic}
A finite subgroup $G$ of $\mathrm{GL}(n, \mathbb F)$ is isomorphic
to a subgroup of $\mathrm{GL}(n_0, \mathbb{Q})$.
\end{lemma}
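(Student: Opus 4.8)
The plan is to reach $\mathbb{Q}$ from $\mathbb{F}$ in three reductions: restriction of scalars down to $\mathbb{L}$, a specialization of the transcendental generators $x_1, \ldots, x_m$, and a final restriction of scalars down to $\mathbb{Q}$. For the first reduction, regard the natural module $\mathbb{F}^n$ as a vector space over $\mathbb{L} = \mathbb{P}(x_1, \ldots, x_m)$; since $|\mathbb{F}:\mathbb{L}| = e$, it has $\mathbb{L}$-dimension $ne$, and every $\mathbb{F}$-linear automorphism is in particular $\mathbb{L}$-linear. Choosing an $\mathbb{L}$-basis of $\mathbb{F}$ therefore yields an inclusion $\mathrm{GL}(n, \mathbb{F}) \leq \mathrm{GL}(ne, \mathbb{L})$, so $G$ is isomorphic to a finite subgroup of $\mathrm{GL}(ne, \mathbb{L})$; clearing denominators across the (finitely many) generators of $G$ and their inverses, this copy lies in $\mathrm{GL}(ne, R')$ for some $R' \subseteq \frac{1}{\mu}\mathbb{P}[x_1, \ldots, x_m]$.

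For the second reduction, assume $m \geq 1$ (if $m = 0$ then $\mathbb{L} = \mathbb{P}$ and this step is empty). Pick $a = (a_1, \ldots, a_m)$ with $a_i \in \mathbb{P}$ and $\mu(a) \neq 0$, and apply the substitution homomorphism $\phi_{3,a} : \mathrm{GL}(ne, R') \rightarrow \mathrm{GL}(ne, \mathbb{P})$ of Section~\ref{FFieldSubsection}. Since $\mathrm{char}\,\mathbb{F} = 0$, Corollary~\ref{ElementarySelberg}(i) shows that $\ker \phi_{3,a}$ is torsion-free; as $G$ is finite, hence a torsion group, we get $\ker \phi_{3,a} \cap G = 1$, so $\phi_{3,a}$ restricts to an isomorphism of $G$ onto a subgroup of $\mathrm{GL}(ne, \mathbb{P})$.

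For the third reduction, repeat the restriction-of-scalars argument: viewing $\mathbb{P}^{ne}$ as a $\mathbb{Q}$-vector space of dimension $nek = n_0$ (using $|\mathbb{P}:\mathbb{Q}| = k$) gives $\mathrm{GL}(ne, \mathbb{P}) \leq \mathrm{GL}(n_0, \mathbb{Q})$. Composing the three maps embeds $G$ as a subgroup of $\mathrm{GL}(n_0, \mathbb{Q})$, as required. None of the steps is technically deep; the one point needing care is that the specialization in the second reduction must not collapse $G$, and this is exactly what the torsion-freeness of $\ker \phi_{3,a}$ delivers. One should also note that the degree bookkeeping is uniform: the edge cases $m = 0$, $k = 1$, and $e = 1$ simply omit the corresponding reduction, and in every case the dimensions read off at the three stages multiply to $n_0 = nke$.
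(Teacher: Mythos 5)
Your proof is correct and follows essentially the same three-step reduction as the paper's one-line proof: restriction of scalars from $\mathbb{F}$ to $\mathbb{L}$, passage from $\mathbb{L}$ to $\mathbb{P}$, and restriction of scalars from $\mathbb{P}$ to $\mathbb{Q}$. The only difference is that the paper handles the middle step by citing \cite[p.~69, Corollary~4]{Supr}, whereas you argue it directly via the specialization $\phi_{3,a}$ of Section~\ref{FFieldSubsection} and the torsion-freeness of its kernel from Corollary~\ref{ElementarySelberg}(i), making the proof self-contained within the paper's own apparatus.
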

\begin{proof}
Certainly $G$ is isomorphic to a subgroup of $\mathrm{GL}(ne,
\mathbb{L})$, and a subgroup of $\mathrm{GL}(ne, \mathbb{P})$ is
isomorphic to a subgroup of $\mathrm{GL}(nke, \mathbb{Q})$. The
lemma follows from \cite[\mbox{p.} 69, Corollary~4]{Supr}.
\end{proof}

It is well-known that the order of a finite subgroup of
$\mathrm{GL}(n, \mathbb Q)$ is bounded by a function of $n$ (see,
e.g., \cite{Feit, Friedland}). Hence by
Lemma~\ref{FiniteIsomorphic} there are functions $\nu_1 =
\nu_1(n_0)$ and $\nu_2 = \nu_2(n_0)$ bounding the order of a
finite subgroup of $\mathrm{GL}(n, \mathbb{F})$ and the order of a
torsion element of $\mathrm{GL}(n, \mathbb{F})$, respectively. For
$n_0>10$ or $n_0=3,5$ we may take $\nu_1 = 2^{n_0}(n_0)!$ by
\cite[Theorem A]{Feit}; for the remaining $n_0$, values of $\nu_1$
are also listed there. A suitable function $\nu_2$ is given by the
next lemma.
\begin{lemma}\label{ExponentBound}
%If $g$ is a torsion element of $\mathrm{GL}(n, \mathbb F)$, then
%$|g| \leq 2^{s+1} \hspace*{.15mm} 3^{\left \lfloor n_0/2 \right
%\rfloor}$ where $2^s$ is the largest power of $2$ dividing $n_0!$.
%
If $g$ is a torsion element of $\mathrm{GL}(n, \mathbb F)$ then
$|g| \leq 2^{\lfloor \log_2
n_0\rfloor + 1} 3^{\left \lfloor n_0/2 \right \rfloor}$.
\end{lemma}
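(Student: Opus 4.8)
The plan is to reduce to $\mathrm{GL}(n_0,\mathbb{Q})$ and then bound the order of a torsion element there by analysing the cyclotomic factors of its minimal polynomial. Since $g$ has finite order, $\langle g\rangle$ is a finite subgroup of $\mathrm{GL}(n,\mathbb{F})$, so Lemma~\ref{FiniteIsomorphic} lets us assume $g\in\mathrm{GL}(N,\mathbb{Q})$ with $N=n_0$. Put $d=|g|$. As $\mathrm{char}\,\mathbb{Q}=0$, the matrix $g$ is diagonalizable over $\mathbb{C}$ with roots of unity as eigenvalues; hence its minimal polynomial over $\mathbb{Q}$ is the product of the distinct $m$-th cyclotomic polynomials as $m$ ranges over the set $T$ of orders of the eigenvalues of $g$. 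Thus $\mathrm{lcm}(T)=d$ and $\sum_{m\in T}\varphi(m)\le N$, the left side being the degree of the minimal polynomial, which divides the degree-$N$ characteristic polynomial (here $\varphi$ is Euler's function).

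Next I would write $d=2^s u$ with $u$ odd and bound the two factors separately. Some $m\in T$ has $2$-part equal to $2^s$; when $s\ge 1$ this forces $\varphi(2^s)=2^{s-1}$ to divide $\varphi(m)\le N$, so $2^{s-1}\le N$ and therefore $s\le\lfloor\log_2 N\rfloor+1$ (trivially so if $s=0$), giving $2^s\le 2^{\lfloor\log_2 N\rfloor+1}$. For the odd part, let $T^\ast$ be the set of odd parts exceeding $1$ of the members of $T$. Then each $m^\ast\in T^\ast$ is an odd integer $\ge 3$, we have $\mathrm{lcm}(T^\ast)=u$, and---choosing for each $m^\ast$ some $m\in T$ with odd part $m^\ast$ and using $\varphi(m^\ast)\mid\varphi(m)$---we get $\sum_{m^\ast\in T^\ast}\varphi(m^\ast)\le\sum_{m\in T}\varphi(m)\le N$. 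Because $\varphi$ is even on every integer $\ge 3$, this sum is an \emph{even} integer at most $N$, hence at most $2\lfloor N/2\rfloor$.

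The arithmetic heart is the elementary inequality $a^2\le 3^{\varphi(a)}$ for every odd integer $a\ge 1$. Granting it,
\[
u=\mathrm{lcm}(T^\ast)\le\prod_{m^\ast\in T^\ast}m^\ast\le\prod_{m^\ast\in T^\ast}3^{\varphi(m^\ast)/2}=3^{\frac12\sum_{m^\ast\in T^\ast}\varphi(m^\ast)}\le 3^{\lfloor N/2\rfloor},
\]
and combining with the $2$-part estimate gives $|g|=d=2^su\le 2^{\lfloor\log_2 n_0\rfloor+1}3^{\lfloor n_0/2\rfloor}$. To prove $a^2\le 3^{\varphi(a)}$: writing $a=p_1^{e_1}\cdots p_t^{e_t}$ with odd primes $p_i$, multiplicativity of $\varphi$ and $\varphi(p^e)\ge 2$ for odd $p$ give $\varphi(a)=\prod_i\varphi(p_i^{e_i})\ge\sum_i\varphi(p_i^{e_i})$, so it is enough to check $p^{2e}\le 3^{\varphi(p^e)}$ for an odd prime power $p^e$; for $p=3$ this is $e\le 3^{e-1}$, and for $p\ge 5$ it follows from $p-1\ge 2\log_3 p$ together with $p^{e-1}\ge e$.

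I expect the only obstacle to be careful bookkeeping rather than real difficulty: one must treat $T^\ast$ as a set (distinct $m\in T$ can share an odd part) and re-index the $\varphi$-sum accordingly, and one must track the floor functions so the exponents emerge exactly as $\lfloor\log_2 n_0\rfloor+1$ and $\lfloor n_0/2\rfloor$---the parity remark that $\varphi(m^\ast)$ is even for odd $m^\ast\ge 3$ being precisely what makes the $3$-exponent $\lfloor n_0/2\rfloor$ rather than $n_0/2$. The closing elementary inequality needs only a brief case check and poses no conceptual problem.
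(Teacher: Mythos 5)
Your proof is correct, and it follows a genuinely different route from the one in the paper. The paper handles the two prime powers by citation: it invokes Friedland's theorem \cite{Friedland} directly for the bound $3^{\lfloor n/2 \rfloor}$ on elements of odd order, and for the $2$-part it appeals to the Plesken--Leedham-Green result \cite{PleskenCRLG} that a $2$-element of $\mathrm{GL}(n,\Q)$ is conjugate to a monomial matrix, from which the estimate $2^{\lfloor \log_2 n\rfloor+1}$ follows via the bound on $2$-elements of $\mathrm{Sym}(n)$; the reduction to $\mathbb{Q}$ and degree $n_0$ via Lemma~\ref{FiniteIsomorphic} is then applied at the end. You instead make the whole argument self-contained: after the same reduction to $\mathrm{GL}(n_0,\mathbb{Q})$, you read off both prime-power bounds from the fact that the minimal polynomial is a product of distinct cyclotomic polynomials of total degree at most $n_0$, using divisibility of $\varphi$-values for the $2$-part and the elementary inequality $a^2 \le 3^{\varphi(a)}$ (for odd $a$) together with the evenness of $\varphi$ on odd $a\ge 3$ to obtain exactly $3^{\lfloor n_0/2\rfloor}$. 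What the paper's approach buys is brevity; what yours buys is independence from the rather heavy external inputs (Friedland's paper and the Sylow-normalizer structure of $\mathrm{GL}(n,\Q)$), giving a short first-principles derivation of the same constants. The bookkeeping you flag (distinct $m\in T$ sharing an odd part, the floor in the exponent coming from parity of $\varphi$) is handled correctly.
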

\begin{proof}
Let  $\mathbb{F} = \mathbb{Q}$. If $|g|$ is odd then $|g| \leq
3^{\left \lfloor n/2 \right \rfloor}$ by \cite[\mbox{p.}
3519]{Friedland}. Suppose that $g$ is a $2$-element. Then $g$ is
conjugate to a monomial matrix over $\Q$ (see
\cite[IV.4]{PleskenCRLG}). Since the order of a $2$-element in
$\mathrm{Sym}(n)$ is bounded by the largest power $2^t$ of $2$
less than or equal to $n$, 
%Correction here!
$|g| \leq 2^{t+1}$. Lemma~\ref{FiniteIsomorphic} now
implies the result in the general case $\mathbb{F}\supseteq
\mathbb{Q}$.
\end{proof}
Here is one more useful condition to detect infinite groups in
characteristic zero.
\begin{lemma}\label{OneMoreUsefulCondition}
If $G \leq \mathrm{GL}(n, \mathbb F)$ is finite and $p > n_0 + 1$
then $p \nmid |G|$.
\end{lemma}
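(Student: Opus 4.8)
The plan is to reduce immediately to the rational case via Lemma~\ref{FiniteIsomorphic} and then apply Lemma~\ref{ThreePointThree}-type reasoning. Here is the reasoning I would write.

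\begin{proof}
Suppose $G \leq \mathrm{GL}(n, \mathbb{F})$ is finite and $p \mid |G|$. By Cauchy's theorem $G$ has an element of order $p$, hence $\mathrm{GL}(n,\mathbb{F})$ contains a non-trivial $p$-subgroup. But $n_0 = nke$, so by Lemma~\ref{ThreePointThree} (with $nk$ replaced by $n_0$ after the embedding) there is no such subgroup once $p > n_0 + 1$. More precisely: by Lemma~\ref{FiniteIsomorphic}, $G$ embeds in $\mathrm{GL}(n_0, \mathbb{Q})$, so it suffices to show that $\mathrm{GL}(n_0,\mathbb{Q})$ has no element of order $p$ when $p > n_0 + 1$. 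If $g \in \mathrm{GL}(n_0,\mathbb{Q})$ has order $p$, then its minimal polynomial divides $t^p - 1 = (t-1)\,\Phi_p(t)$ over $\mathbb{Q}$, where $\Phi_p$ is the $p$th cyclotomic polynomial, which is irreducible of degree $p-1$; since $g \neq 1_{n_0}$, the characteristic polynomial of $g$ is divisible by $\Phi_p(t)$, forcing $p - 1 \leq n_0$, i.e. $p \leq n_0 + 1$, a contradiction. Hence $p \nmid |G|$.
\end{proof}

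The only subtlety is making sure the hypothesis in Lemma~\ref{ThreePointThree} is correctly transported across the embedding: the statement there is phrased with the bound $nk+1$ for subgroups of $\mathrm{GL}(n,\mathbb{F})$ with $|\mathbb{F}:\mathbb{Q}(\text{prime subfield dilated})| $ data, but the content of its proof is exactly the cyclotomic-polynomial-degree argument applied in $\mathrm{GL}(N,\mathbb{Q})$ for the relevant $N$; taking $N = n_0$ and invoking Lemma~\ref{FiniteIsomorphic} gives the claim directly. So really the proof is just two invocations: Lemma~\ref{FiniteIsomorphic} to get into $\mathrm{GL}(n_0,\mathbb{Q})$, then the $p-1 \le n_0$ bound on orders of torsion elements there. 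I do not anticipate a genuine obstacle; the one thing to be careful about is whether one needs $p > n_0 + 1$ strictly (yes — $p = n_0 + 1$ would permit an element of order $p$, e.g. a companion matrix of $\Phi_p$ of size exactly $n_0$), which is why the bound is stated as $p > n_0 + 1$ rather than $p \geq n_0 + 1$.
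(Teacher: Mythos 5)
Your proof is correct and matches the paper's approach exactly: the paper likewise derives the lemma by combining Lemma~\ref{FiniteIsomorphic} (to embed $G$ into $\mathrm{GL}(n_0,\mathbb{Q})$) with the cyclotomic-degree argument from Lemma~\ref{ThreePointThree}. You have simply unpacked the one-line citation into its underlying steps, and your remark about the sharpness of $p > n_0 + 1$ is also correct.
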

\begin{proof}
This follows from Lemmas~\ref{ThreePointThree} and
\ref{FiniteIsomorphic}.
\end{proof}

Now suppose that $\mathrm{char}\, \mathbb{F} > 0$. The order of a
finite subgroup of $\mathrm{GL}(n, \mathbb{F})$ can be arbitrarily
large. On the other hand, the orders of torsion elements of
$\mathrm{GL}(n, \mathbb{F})$ are bounded. The next lemma furnishes
such a bound.
\begin{lemma} \label{excelbound}
Let $n_0 = ne$. If $g$ is a torsion element of
$\mathrm{GL}(n,\mathbb{F})$ then $|g|\leq q^{n_0}-1$.
\end{lemma}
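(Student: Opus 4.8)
The plan is to reduce to the case $e=1$ (so $\mathbb{F} = \mathbb{L} = \mathbb{F}_q(x_1,\dots,x_m)$) by the standard embedding argument, then argue inside the rational function field over $\mathbb{F}_q$ and its specializations. First I would observe that, since $\mathbb{F}$ is a degree-$e$ extension of $\mathbb{L} = \mathbb{P}(x_1,\dots,x_m)$ with $\mathbb{P} = \mathbb{F}_q$, every element of $\mathrm{GL}(n,\mathbb{F})$ acts $\mathbb{L}$-linearly on $\mathbb{F}^n \cong \mathbb{L}^{ne}$, giving an embedding $\mathrm{GL}(n,\mathbb{F}) \hookrightarrow \mathrm{GL}(ne,\mathbb{L}) = \mathrm{GL}(n_0,\mathbb{L})$. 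So it suffices to bound the order of a torsion element $g \in \mathrm{GL}(n_0,\mathbb{F}_q(x_1,\dots,x_m))$.

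Next I would bound such a $g$ by passing to a finite field. A torsion element $g$ generates a finite cyclic group $\langle g\rangle \leq \mathrm{GL}(n_0,\mathbb{F}_q(x_1,\dots,x_m))$; since this group is finitely generated (indeed finite), all entries of the finitely many matrices $g^i$ and of $g^{-1}$ lie in $\frac{1}{\mu}\mathbb{F}_q[x_1,\dots,x_m]$ for a suitable $\mu$. Choosing a point $a = (a_1,\dots,a_m)$ over a sufficiently large finite extension $\mathbb{F}_{q^s}$ of $\mathbb{F}_q$ that is a non-root of $\mu$ and that separates the distinct powers $g^0, g^1, \dots, g^{|g|-1}$ (possible by the argument in the proof of Lemma~\ref{ForAllButaFiniteNumber}, since $\mathbb{F}_q[x_1,\dots,x_m]$ has infinitely many such points once we are allowed to enlarge the residue field), the specialization $\phi_{3,a}$ maps $\langle g\rangle$ injectively into $\mathrm{GL}(n_0,\mathbb{F}_{q^s})$ — wait, this only gives a bound involving $q^s$, not $q$. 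To get the clean bound $q^{n_0}-1$ I would instead argue directly: the characteristic polynomial of $g$ over $\mathbb{F}_q(x_1,\dots,x_m)$ has degree $n_0$, and since $g$ has finite order its minimal polynomial divides $t^{|g|}-1$, which is separable (as $|g|$ is prime to $\mathrm{char}\,\mathbb{F} = p$, by Corollary~\ref{ElementarySelberg}(iii) applied to a congruence subgroup, or more simply because a torsion element in characteristic $p$ has order prime to $p$ after splitting off its unipotent part — but $g$ is semisimple here since $t^{|g|}-1$ would not be separable otherwise; one must be slightly careful and note $g$ is conjugate to a diagonal matrix over $\overline{\mathbb{F}_q(x_1,\dots,x_m)}$ only after removing the $p$-part). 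The cleanest route: write $|g| = p^a m$ with $p \nmid m$; then $g^{p^a}$ has order $m$ and is semisimple, so its eigenvalues lie in an extension of $\mathbb{F}_q$ of degree at most $n_0$, forcing $m \mid q^{n_0}-1$; and the unipotent part contributes a $p$-power at most... this does not obviously give $p^a \leq$ anything bounded by $q^{n_0}-1$.

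I therefore expect the main obstacle to be handling the unipotent contribution correctly. The resolution I would pursue: a torsion element of $\mathrm{GL}(n,\mathbb{F})$ in characteristic $p$ need not be semisimple in general, but a torsion element that is \emph{unipotent} has order a power of $p$ at most $p^{\lceil \log_p n_0\rceil}$, which for $n_0 \geq 2$ is at most $q^{n_0}-1$ since $p \leq q$ and the exponent is far smaller than $n_0$; combined multiplicatively via the Jordan decomposition $g = g_s g_u$ with commuting semisimple $g_s$ (order dividing $q^{n_0}-1$) and unipotent $g_u$, one gets $|g| = |g_s|\,|g_u| \leq (q^{n_0}-1)\cdot p^{\lceil \log_p n_0\rceil}$, which is too weak. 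The right statement must use that over a \emph{field} $g_s$ and $g_u$ share a common eigenbasis structure: in fact the order of \emph{any} element of $\mathrm{GL}(n_0, \mathbb{F}_{q^s})$ divides $\mathrm{lcm}$ over block sizes $d$ of $p^{\lceil \log_p d\rceil}(q^{sd}-1)$ — so to land exactly on $q^{n_0}-1$ one genuinely needs $g$ to be semisimple. So I would go back and justify semisimplicity: since $\mathbb{F}_q(x_1,\dots,x_m)$ is a perfect field? No — function fields in positive characteristic are \emph{not} perfect. Hmm. The honest plan, then, is: reduce to $\mathrm{GL}(n_0,\mathbb{F}_q(x_1,\dots,x_m))$; specialize at a non-root $a$ of $\mu$ over $\mathbb{F}_q$ \emph{itself} when one exists (e.g.\ when $q$ is large relative to $\deg\mu$, or after a preliminary base change folded into the bound), so that $\langle g\rangle \hookrightarrow \mathrm{GL}(n_0,\mathbb{F}_q)$ and $|g| \mid |\mathrm{GL}(n_0,\mathbb{F}_q)|$; then invoke the classical fact that the largest order of an element of $\mathrm{GL}(n_0,\mathbb{F}_q)$ is exactly $q^{n_0}-1$ when $g$ ranges over Singer cycles and more generally is bounded by $q^{n_0}-1$ — this last inequality on element orders in $\mathrm{GL}(n_0,q)$ is the key input and is where I would cite a standard reference. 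The main work, and the part I'd flag as the real obstacle, is ensuring the specialization can be taken to be injective on $\langle g\rangle$ over $\mathbb{F}_q$ (not a larger field) without the bound deteriorating; the safe workaround is to prove the bound for element orders in $\mathrm{GL}(n_0,\mathbb{F}_q(x_1,\dots,x_m))$ directly by the eigenvalue/minimal-polynomial argument above, taking care to observe that a finite-order element is automatically semisimple \emph{because its minimal polynomial divides the separable polynomial $t^{|g|/p^{v_p(|g|)}}-1$ times a $p$-power, and the unipotent part over any field has order $\leq p^{\lceil\log_p n_0\rceil} \leq q^{n_0}-1$ while the semisimple part has order dividing $q^{n_0!}-1$} — and then reconcile the constants. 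Given the stated clean bound $q^{n_0}-1$, the intended proof almost certainly just specializes to a finite field $\mathbb{F}_{q}$ (absorbing any needed extension into the choice of $q$, since $\mathbb{P} = \mathbb{F}_q$ is ours to enlarge) and quotes the order bound for $\mathrm{GL}(n_0,q)$; I would write it that way and cite \cite{Feit} or a comparable source for the $\mathrm{GL}(n_0,q)$ element-order bound.
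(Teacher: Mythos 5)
Your reduction to $\mathbb{F}=\mathbb{L}=\mathbb{F}_q(x_1,\dots,x_m)$ via the regular representation (passing to $\mathrm{GL}(n_0,\mathbb{L})$) matches the paper's opening move. After that, however, you drift through several inconclusive ideas and never arrive at the step that actually produces the clean bound, and some of the intermediate reasoning is incorrect.

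The specialization route you keep returning to has exactly the defect you flag: a point $a$ injective on $\langle g\rangle$ generally lives in a proper extension $\mathbb{F}_{q^s}$, yielding only $|g|\le q^{sn_0}-1$, and there is no guarantee of a usable point over $\mathbb{F}_q$ itself (for small $q$ the polynomial $\mu$ may have no non-root in $\mathbb{F}_q^m$ at all). Your suggestion to ``absorb any needed extension into the choice of $q$'' does not help, because $q$ in the statement is fixed by $\mathbb{P}$. Separately, the claim that a torsion element must be semisimple (or that its minimal polynomial divides a separable polynomial) is false in characteristic $p$: a non-trivial unipotent matrix is a counterexample, and semisimplicity plays no role in the correct argument. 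The Jordan-decomposition bookkeeping you attempt, as you observe yourself, does not land on $q^{n_0}-1$.

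The paper's proof uses a structural input you do not invoke, namely Zalesskii's theorem: a torsion element $g$ of $\mathrm{GL}(n_0,\mathbb{L})$ is conjugate to a block upper-triangular matrix whose irreducible diagonal blocks have entries in $\mathbb{F}_q$. Consequently the characteristic polynomial of $g$ has $\mathbb{F}_q$-coefficients. By Cayley--Hamilton, the $\mathbb{F}_q$-subalgebra $\langle g\rangle_{\mathbb{F}_q}\subseteq\mathrm{Mat}(n_0,\mathbb{L})$ is spanned by $1,g,\dots,g^{n_0-1}$, hence has at most $q^{n_0}$ elements, and its unit group (which contains $g$) has order at most $q^{n_0}-1$. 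This sidesteps both specialization and semisimplicity entirely, and handles unipotent parts for free since $(t-1)^{n_0}$ already has $\mathbb{F}_q$-coefficients. You correctly sensed that the characteristic polynomial is the right object, but the missing ingredient is the fact that its coefficients can be taken in $\mathbb{F}_q$ --- that is the content of Zalesskii's result, and without it the argument does not close.
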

\begin{proof}
The proof is essentially the same as that of \cite[Theorem 3.3,
Corollary 3.4]{rtb99}. We recap the main points. It suffices to
assume that $\mathbb{F} = \mathbb{L}$. By \cite{Zalesskii}, $g$ is
conjugate to a block upper triangular matrix, where the
(irreducible) blocks are $\mathbb{F}_q$-matrices. Hence the
characteristic polynomial of $g$ has $\mathbb{F}_q$-coefficients.
It follows that the dimension of $\gp{g}_{\mathbb{F}_q}$ is at
most $n$, and so every invertible element of this enveloping
algebra has order at most $q^n-1$.
\end{proof}

\subsection{Testing finiteness}\label{testingfiniteness}

By Section~\ref{ConstructionofSWhomomorphisms}, we are able to
construct a congruence image $\phi_\rho(G)$ of $G\leq$
$\mathrm{GL}(n,\F)$ over a finite field such that the torsion
elements of $G_\rho:= G\cap \Gamma_\rho$ are unipotent. Thus, to
decide finiteness of $G$, we merely test whether $G_\rho$ is
trivial ($\mathrm{char}\, \mathbb{F} = 0$), or whether $G_{\rho}$
is unipotent ($\rm{char}\, \mathbb{F} > 0$). Both tasks can be
accomplished with just {\it normal generators} of $G_\rho$:
generators for a subgroup whose normal closure in $G$ is
$G_\rho$. That is, we do not need to construct the full
congruence subgroup. Normal generators are found by a standard
method \cite[\mbox{pp.} 299--300]{HoltEicketal05} that requires a
presentation of $\phi_\rho(G)$ as input. Since it is a matrix
group over a finite field, we can compute a presentation of
$\phi_\rho(G)$ using the algorithms described in \cite{CT,OBE}. We
refer to such an algorithm as $\tt Presentation$. Let $\tt
SWImage$ be an algorithm that constructs a congruence image over a
finite field. The congruence homomorphism in question is one of
the SW-homomorphisms $\Phi=\Phi_i$, $1\leq i \leq 4$, defined in
Sections~\ref{RationalField}--\ref{AlgebraicFunctionFields}. The
following procedure tests finiteness along the lines just
explained (see Section~\ref{asymptotics} for definitions of $n_0$
and $\nu_1$).

\bigskip

$\tt IsFiniteMatrixGroup$

\vspace*{1mm}

Input: $S = \{g_1, \ldots, g_r\}\subseteq \mathrm{GL}(n, \mathbb
F)$.

Output: $\tt true$ if $G =\gp{S\,}$ is finite; $\tt false$
otherwise.

\vspace*{1mm}

\begin{enumerate}
\item $H := {\tt SWImage}(G) = \gp{\Phi(g_1), \ldots, \Phi(g_r)}$.

\item If $\mathrm{char} \,\mathbb{F}=0$ and either $|H|>\nu_1$ or
$p$ divides $|H|$ for some prime $p>n_0+1$, then return $\tt
false$.

\item ${\tt Presentation}(H) :=\group{\Phi(g_1), \ldots,
\Phi(g_r)}{\omega_j(\Phi(g_1), \ldots, \Phi(g_r)) = 1; 1\leq j\leq
t}$.

\item  $K := \{ \omega_j (g_1, \ldots , g_r) \mid  1\leq j \leq t
\}$.

\item If $\mathrm{char} \, \mathbb{F}=0$ and $K= \{1_n\}$, or
$\mathrm{char} \, \mathbb{F}>0$ and ${\tt IsUnipotent}(\gp{K}^G)$,
then return $\tt true$. Else return $\tt false$.
\end{enumerate}

\bigskip

Step (2) is justified by Lemma~\ref{OneMoreUsefulCondition} and
the comments before Lemma~\ref{ExponentBound}. For example, if
$\mathbb{F}$ is a number field then Lemma
\ref{ForAllButaFiniteNumber} suggests that the initial check in
this step will usually identify that $G$ is infinite. We test
unipotency of the congruence subgroup $\gp{K}^G$ in step (5) using
the normal generating set $K$. A procedure for doing this, based
on computation in enveloping algebras, is given in \cite[Section
5.2]{Tits}. Also note that we can apply a conjugation isomorphism
as in \cite{GlasbyHowlett} to write the SW-image over the smallest
possible finite field of the chosen characteristic.

Next we consider the special but very important case that $G$ is a
cyclic group: testing whether $g\in \mathrm{GL}(n, \mathbb{F})$
has finite order. Let $\nu_2$ be an upper bound on the order of a
torsion element of $\mathrm{GL}(n,\mathbb{F})$. See
Lemmas~\ref{ExponentBound} and \ref{excelbound} for values of
$\nu_2$.

\bigskip

$\tt IsFiniteCyclicMatrixGroup$

\vspace*{1mm}

Input: $g \in \mathrm{GL}(n, \mathbb F)$.

Output: $\tt true$ if $g$ has finite order; $\tt false$ otherwise.

\vspace*{1mm}

\begin{enumerate}

\item $h := {\tt SWImage}(g)$.

\item $d := {\tt Order}(h)$.

\item If $d > \nu_2$, or $\mathrm{char}\, \mathbb{F}=0$ and $p\mid
d$ for some prime $p>n_0 + 1$, then return $\tt false$.

\item If $\mathrm{char}\, \mathbb{F}=0$ and $g^d = 1_n$, or
$\mathrm{char}\, \mathbb{F}>0$ and ${\tt IsUnipotent}(g^d)$, then
return $\tt true$. Else return $\tt false$.

\end{enumerate}

\bigskip

Note that $g^d$ is unipotent in characteristic $p>0$ if and only
if its order divides $p^{\left \lceil \log_p n\right \rceil}$ (see
\cite[\mbox{p.} 192]{Supr}). Also, if $\mathrm{char}\,
\mathbb{F}=0$ and $\tt IsFiniteCyclicMatrixGroup$ returns $\tt
true$, then the order $d$ of $g$ is calculated in step (2). In the
situations covered by Lemma~\ref{ForAllButaFiniteNumber}, if $|g|$
is infinite then $d>\nu_2$ for all but a finite number of choices
of $\Phi$. That is, we expect that infiniteness of $|g|$ will be
detected at step (3) of $\tt IsFiniteCyclicMatrixGroup$.

Recall that an infinite group $G\leq \mathrm{GL}(n,\F)$ has an
infinite order element. Hence, as a precursor to running $\tt
IsFiniteMatrixGroup$, we check via $\tt IsFiniteCyclicMatrixGroup$
whether `random' elements of $G$, produced by a variation of the
product replacement algorithm \cite{PRA}, have infinite order;
\mbox{cf.} \cite[Section 8.2]{BBR}.

\subsection{Recognizing finite matrix groups}
\label{recogfinite}

Suppose that $G\leq \mathrm{GL}(n,\mathbb{F})$ is finite. We
describe how to find an isomorphic copy of $G$ in some
$\mathrm{GL}(n,q)$ and carry out further computations with $G$.

If $\mathrm{char}\, \mathbb{F}=0$ then ${\tt SWImage}(G)= \Phi(G)$
is isomorphic to $G$. If $\mathrm{char}\, \mathbb{F}>0$ then the
congruence subgroup may be non-trivial. We repeat the construction
of normal generators of the congruence subgroup for different
choices of $\Phi$, until we find a $\Phi$ for which all these
generators are trivial. By the discussion at the end of
Section~\ref{AnalyzingCongruenceHomomorphismsSubsection}, if $m =
1$ (there is just one indeterminate) then in a finite number of
iterations we will get an isomorphic copy of $G$ by
Lemma~\ref{ForAllButaFiniteNumber}. Otherwise, although there are
infinitely many isomorphisms $\Phi$, the procedure may not
terminate. In our many experiments the procedure always succeeded
in finding an isomorphic copy of $G$.
%r.e. set has to be countable; no guarantee that set of maximal ideals
%of one of our countable rings is countable. Set of possible image
%fields is, however.
%'Thin' sets: applies to admissibility, a different issue.

Once we have an isomorphic copy, algorithms for matrix groups over
finite fields (see \cite{CT} and \cite[Chapter
10]{HoltEicketal05}) are used to investigate the structure and
properties of $G$. In particular, we can

\begin{itemize}
\item compute a composition series and short presentation for $G$;

\item compute $|G|$;

\item compute the solvable and unipotent radicals, the derived
subgroup, center, and Sylow subgroups of $G$;

\item test membership of $x\in \mathrm{GL}(n,\F)$ in $G$.
\end{itemize}
Where feasible, the computation is undertaken directly in the
isomorphic copy, and the result is `lifted' by means of the known
isomorphism to $G$. Sometimes this involves additional work. For
instance, membership testing requires that we construct a new
isomorphic copy; namely, of $\gp{G, x}$.

\section{Implementation and performance}
\label{expresultssection}

The algorithms have been implemented in {\sc Magma} as part of our
package {\sc Infinite} \cite{Infinite}. We use machinery from the
{\sc CompositionTree} package \cite{CT,OBE} to study congruence
images and construct their presentations.

We implemented SW-homomorphisms as per
Sections~\ref{RationalField}--\ref{AlgebraicFunctionFields}. These
are applied in {\sc Infinite} to solve specific problems, such as
testing finiteness, virtual properties, and nilpotency (the latter
over an arbitrary field, significantly enhancing \cite{Draft}).
Here we report on the algorithms of
Sections~\ref{testingfiniteness} and \ref{recogfinite}.

In our implementation of $\tt IsFiniteMatrixGroup$ and $\tt
IsFiniteCyclicMatrixGroup$, we construct (at least) two
SW-homomorphisms and determine the orders of the images of $G$
under these. If $G$ is finite and $\mathrm{char}\, \F = 0$, then
the orders must be identical. In positive characteristic, the
least common multiple of the orders of two images of an element of
finite $G$ must be at most $\nu_2$. The single most expensive task
is evaluating relations to obtain normal generators for the kernel
of an SW-homomorphism, since this may lead to blow-up in the size
of matrix entries. Hence we first check the orders of images under
several SW-homomorphisms before we evaluate relations.

In \cite{PositiveFiniteness} we proposed an alternative algorithm
to decide finiteness for groups defined over function fields of
positive characteristic. This is an option in {\sc Infinite}; it
avoids evaluation of relations over the field of definition, and
is sometimes faster than $\tt IsFiniteMatrixGroup$ for such
groups.

We now describe sample outputs that illustrate the efficiency and
scope of our implementation. The examples chosen cover the main
domains and a variety of groups. Our experiments were performed
using {\sc Magma} V2.17-2 on a 2GHz machine. All examples are
randomly conjugated, so that generators are not sparse, and matrix
entries (numerators and denominators) are large. Since random
selection plays a role in some of the {\sc CompositionTree}
algorithms, times stated are averages over three runs. The
complete examples are available in the {\sc Infinite} package.

\begin{enumerate}
\item $G_1\leq \mathrm{GL}(24,\Q(\zeta_{17}))$ is a conjugate of
the monomial group $\gp{\zeta_{17}} \wr \mathrm{Sym}(24)$. It has
order $17^{24}24!$, the maximum possible for a finite subgroup of
$\mathrm{GL}(24,\Q(\zeta_{17}))$ by \cite{Feit}. We decide
finiteness of this $3$-generator group and determine its order in
$1435$s; compute a Sylow $3$-subgroup in $22$s; and the derived
group in $57$s.

\item $G_2\leq \mathrm{GL}(12,\F)$ where $\F=\mathbb{P}(x)$ and
$\mathbb{P} = \Q(\sqrt{2})$. It is conjugate to $H_1\wr H_2$ where
$H_1$ is ${\tt RationalMatrixGroup}(4, 2)$ and $H_2={\tt
PrimitiveSubgroup}(3, 1)$, both from standard {\sc Magma}
databases. We decide finiteness of this $7$-generator group in
$18$s; compute its order $2^{16} 3^7$ in $1435$s; its centre in
$3$s; and its Fitting subgroup in $3$s.

\item $G_3 \leq \mathrm{GL}(20,\mathbb{F})$ where $\mathbb{F}$ is
a degree $2$ extension of the function field $\Q(x)$. It is
conjugate to the derived subgroup of the monomial group
$\gp{-1}\wr \mathrm{Sym}(20)$ in $\mathrm{GL}(20,\F)$. We decide
finiteness and compute the order of this $31$-generator group in
$1090$s; and construct a Sylow $7$-subgroup in $5$s.

\item $G_4 \leq \mathrm{GL}(100,\Q(\zeta_{19}))$. We prove that
this $14$-generator group is infinite in $9$s.

\item $G_5 \leq \mathrm{GL}(30,\F)$ where $\F$ is an algebraic
function field of degree $3$ over $\Q(x)$. We prove that this
$4$-generator group is infinite in $1024$s.

\item $G_6 \leq \mathrm{GL}(6, \F)$ where $\F$ is an algebraic
function field of degree $2$ over $\mathbb{F}_{9}(x)$. It is
conjugate to $\mathrm{GL}(6,3^2)$.  We find the order of this
$2$-generator group in $18$s; its unipotent radical in $15$s; a
Sylow $3$-subgroup $H$ in $18$s; and compute the normalizer in
$G_6$ of $H$ in $42$s.

\item $G_7 \leq \mathrm{GL}(16,\F)$ where $\F$ is a degree $3$
extension of $\mathbb{F}_{2}(x)$. It is conjugate to the Kronecker
product of $\mathrm{GL}(8,2)$ with a unipotent subgroup of
$\mathrm{GL}(2,\mathbb{F}_{2}(x))$. We decide finiteness of this
8-generator group in $16$s; we compute its order $16 \cdot
|\mathrm{GL}(8, 2)|$ and an isomorphic copy in $488$s; and
determine the Fitting subgroup in $12$s.

\item $G_8 \leq \mathrm{GL}(12,\mathbb{F})$ where $\mathbb{F}$ is
a function field with two indeterminates over $\mathbb{F}_5$. We
prove that this $8$-generator group is infinite in $6$s.

\item $G_9 \leq \mathrm{GL}(12,\mathbb{F})$ where $\mathbb{F}$ is
a degree $2$ extension of a univariate function field over
$\mathbb{F}_5$. We prove that this $8$-generator group is infinite
in $10$s.
\end{enumerate}

\bibliographystyle{amsplain}

\begin{thebibliography}{10}

\bibitem{CT}
H.~B\"a\"arnhielm, D.~F. Holt, C.R. Leedham-Green, and E.A.\
O'Brien, \emph{A practical model for computation with matrix groups},
 J. Symbolic Comput. \textbf{68} (2015), 27--60. 

\bibitem{BBR}
L.~Babai, R.~Beals, and D.~N. Rockmore, \emph{Deciding finiteness
of matrix groups in deterministic polynomial time}, Proc. of
International Symposium on Symbolic and Algebraic Computation
ISSAC '93 (ACM Press), 1993, pp.~117--126.

\bibitem{Magma}
W.~Bosma, J.~Cannon, and C.~Playoust, \emph{The {M}agma algebra
system. {I}. {T}he user language}, J. Symbolic Comput. \textbf{24}
(1997), no.~3-4, 235--265.

\bibitem{PRA}
F.\ Celler, C.~R.\ Leedham-Green, S.~H.\ Murray, A.~C.\ Niemeyer
and E.~A.~O'Brien. {\it Generating random elements of a finite
group}, Comm.\ Algebra {\bf 23} (1995), 4931--4948.

\bibitem{Detinko01}
A.~S. Detinko, \emph{On deciding finiteness for matrix groups over
fields of positive characteristic}, LMS J. Comput. Math.
\textbf{4} (2001), 64--72 (electronic).

\bibitem{Survey}
A.~S. Detinko, B. Eick, and D.~L. Flannery, \emph{Computing with
matrix groups over infinite fields}, London Math. Soc. Lecture
Note Ser. \textbf{387} (2011), 256--270.

\bibitem{Draft}
A.~S. Detinko and D.~L. Flannery, \emph{Algorithms for computing
with nilpotent matrix groups over infinite domains}, J. Symbolic
Comput. \textbf{43} (2008), 8--26.

\bibitem{JSC4534}
A.~S. Detinko and D.~L. Flannery, \emph{On deciding finiteness of matrix groups}, J.
Symbolic Comput. \textbf{44} (2009), 1037--1043.

\bibitem{PositiveFiniteness}
A.~S. Detinko, D.~L. Flannery, and E.~A. O'Brien, \emph{Deciding
finiteness of matrix groups in positive characteristic}, J.
Algebra \textbf{322} (2009), 4151--4160.

\bibitem{Infinite} A.~S. Detinko, D.~L. Flannery, and E.~A. O'Brien,
\href{http://magma.maths.usyd.edu.au/magma/handbook/matrix_groups_over_infinite_fields}{Matrix groups over infinite fields}.
%\url{http://magma.maths.usyd.edu.au/magma/handbook/text/617}
%(2011).

\bibitem{Tits}
A.~S. Detinko, D.~L. Flannery, and E.~A. O'Brien,
 \emph{Algorithms for the {T}its alternative and related
problems}, J. Algebra \textbf{344} (2011), 397--406.

\bibitem{Feit}
W.~Feit, \emph{The orders of finite linear groups}, preprint
(1995).

\bibitem{Friedland}
S.~Friedland, \emph{The maximal orders of finite subgroups in
${GL}_n(\mathbb{Q})$}, Proc. Amer. Math. Soc. \textbf{125} (1997),
3519--3526.

\bibitem{GAP}
The {\sf GAP} Group. {\sf GAP} -- Groups, Algorithms, and
Programming, \url{www.gap-system.org}.

\bibitem{GlasbyHowlett}
S.~P. Glasby and R.~B. Howlett, \emph{Writing representations over
minimal fields}, Comm. Algebra \textbf{25} (1997), no.~6,
1703--1711.

\bibitem{HoltEicketal05}
D.~F. Holt, B.~Eick, and E.~A. O'Brien, \emph{Handbook of
computational group theory}, Chapman and Hall/CRC, London, 2005.

\bibitem{Isaacs}
I.~Martin Isaacs, \emph{Algebra: a graduate course}, Brooks Cole,
1993.
%, 1st ed.

\bibitem{ivanyos01}
G.~Ivanyos, \emph{Deciding finiteness for matrix semigroups over
function fields over finite fields}, Israel J. Math. \textbf{124}
(2001), 185--188.

\bibitem{Koch}
H.~Koch, \emph{Number theory. {A}lgebraic numbers and functions},
Graduate Studies in Mathematics, vol.~24, American Mathematical
Society, Providence, RI, 2000.

\bibitem{PleskenCRLG}
C.~R. Leedham-Green and W.~Plesken, \emph{Some remarks on {S}ylow
subgroups of general linear groups}, Math. Z. \textbf{191} (1986),
529--535.

\bibitem{OBE}
E.A. O'Brien, \emph{Algorithms for matrix groups}, London Math.
Soc. Lecture Note Ser. \textbf{388} (2011), 297--323.

\bibitem{rtb99}
D.~N. Rockmore, K.-S. Tan, and R.~Beals, \emph{Deciding finiteness
for matrix groups over function fields}, Israel J. Math.
\textbf{109} (1999), 93--116.

\bibitem{Supr}
D.~A. Suprunenko, \emph{Matrix groups}, Transl. Math. Monogr.,
vol. 45, American Mathematical Society, Providence, RI, 1976.

\bibitem{Wehrfritz}
B.~A.~F. Wehrfritz, \emph{Infinite linear groups},
Springer-Verlag, 1973.

\bibitem{Zalesskii}
A.~E. Zalesskii, \emph{Maximal periodic subgroups of the full
linear group over a field with positive characteristic}, Vesci
Akad. Navuk BSSR Ser. Fiz.-Mat. Navuk \textbf{1966} (1966), no.~2,
121--123 (Russian).

\bibitem{ZalesskiiSurv1}
A.~E. Zalesskii,  \emph{Linear groups}, Encyclopaedia Math. Sci., Algebra
{IV}, vol.~37, Springer, Berlin, 1993.

\end{thebibliography}

\end{document}